\documentclass[11pt,a4paper]{amsart} 
\usepackage[T1]{fontenc}
\usepackage{amsmath,amsfonts,amssymb,graphicx,url,amsthm, mathrsfs}
\numberwithin{equation}{section}
\usepackage[left=2.5cm,right=2.5cm,top=2.5cm,bottom=2.5cm]{geometry}
\usepackage{enumitem,kantlipsum}
\usepackage{xcolor,color,nicefrac,svg}
\usepackage{graphicx, bbding}
\usepackage{booktabs}
\usepackage{csquotes}
\usepackage{caption}
\usepackage{subcaption}
\usepackage{mwe}
\usepackage{afterpage}
\usepackage[noend]{algpseudocode}
\usepackage[section,ruled]{algorithm}
\usepackage[export]{adjustbox}
\usepackage[sort]{natbib} 
\usepackage{setspace}
\usepackage[hyperfootnotes=true]{hyperref}
\hypersetup{colorlinks=true,raiselinks=true,breaklinks=true,citecolor=blue}
\usepackage{tikz,pgfplots,pgfplotstable}
\pgfplotsset{compat=newest}
\usetikzlibrary{arrows,decorations,backgrounds,positioning,calc,matrix}
\usepgfplotslibrary{groupplots}
\usetikzlibrary{external}
\tikzset{
    png export/.style={
        external/system call/.add={}{; convert -density 600 %
        -transparent white "\image.pdf" "\image.png"},
        /pgf/images/external info,
        /pgf/images/include external/.code={%
            \includegraphics
            [width=\pgfexternalwidth,height=\pgfexternalheight]
            {##1.png}%
        },
    },
    png export,
}

\pgfplotsset{compat=1.18}
\usepgfplotslibrary{colormaps} 
\pgfplotsset{colormap={viridis}{
  rgb=(0.2670,0.0049,0.3294)
  rgb=(0.2827,0.1002,0.4222)
  rgb=(0.2771,0.1852,0.4899)
  rgb=(0.2539,0.2653,0.5300)
  rgb=(0.2220,0.3392,0.5488)
  rgb=(0.1906,0.4071,0.5561)
  rgb=(0.1636,0.4711,0.5581)
  rgb=(0.1391,0.5338,0.5553)
  rgb=(0.1206,0.5964,0.5436)
  rgb=(0.1347,0.6586,0.5176)
  rgb=(0.2080,0.7187,0.4729)
  rgb=(0.3278,0.7740,0.4066)
  rgb=(0.4775,0.8214,0.3182)
  rgb=(0.6473,0.8584,0.2099)
  rgb=(0.8249,0.8847,0.1062)
  rgb=(0.9932,0.9062,0.1439)
}}
\pgfplotsset{colormap={coolwarm}{
  rgb=(0.2298,0.2987,0.7537)
  rgb=(0.3091,0.4135,0.8501)
  rgb=(0.3940,0.5224,0.9249)
  rgb=(0.4839,0.6220,0.9748)
  rgb=(0.5761,0.7088,0.9978)
  rgb=(0.6673,0.7792,0.9930)
  rgb=(0.7536,0.8302,0.9609)
  rgb=(0.8311,0.8595,0.9031)
  rgb=(0.8995,0.8475,0.8178)
  rgb=(0.9473,0.7947,0.7170)
  rgb=(0.9682,0.7208,0.6123)
  rgb=(0.9627,0.6282,0.5076)
  rgb=(0.9318,0.5191,0.4065)
  rgb=(0.8771,0.3946,0.3117)
  rgb=(0.8008,0.2508,0.2257)
  rgb=(0.7057,0.0156,0.1502)
}}

\pgfplotsset{
  colormap={turbo}{
    rgb=(0.18995,0.07176,0.23217)
    rgb=(0.25107,0.25237,0.63374)
    rgb=(0.27628,0.42592,0.81033)
    rgb=(0.19677,0.57878,0.86109)
    rgb=(0.12757,0.69912,0.82055)
    rgb=(0.13576,0.78442,0.71083)
    rgb=(0.25862,0.82031,0.52497)
    rgb=(0.47714,0.79137,0.25237)
    rgb=(0.74138,0.71773,0.07597)
    rgb=(0.97697,0.61849,0.02980)
    rgb=(0.99656,0.43960,0.14667)
    rgb=(0.94084,0.23127,0.15038)
    rgb=(0.86540,0.10419,0.13684)
    rgb=(0.75829,0.01514,0.12837)
  }}

\pgfplotsset{
  warpaxis/.style={
    width=5cm, axis equal image, enlargelimits=false, axis on top,
    xmin=0, xmax=1, ymin=0, ymax=1,
    xtick={0,0.5,1}, ytick={0,0.5,1},
    tick label style={font=\scriptsize},
    label style={font=\small},
  },
  warpcbar/.style={
  colorbar,
  colorbar style={
    width=2mm,
    tick label style={font=\scriptsize},
    yticklabel style={/pgf/number format/.cd, fixed},
  },
},
}

\newtheorem{theorem}{Theorem}[section]
\newtheorem{lemma}[theorem]{Lemma}
\newtheorem{corollary}[theorem]{Corollary}

\newtheorem{definition}[theorem]{Definition}
\newtheorem{example}[theorem]{Example}
\newtheorem{remark}[theorem]{Remark}

\usepackage{xparse}
\usepackage{verbatim}
\newsavebox{\fminipagebox}
\NewDocumentEnvironment{fminipage}{m O{\fboxsep}}
 {\par\kern#2\noindent\begin{lrbox}{\fminipagebox}
  \begin{minipage}{#1}\ignorespaces}
 {\end{minipage}\end{lrbox}%
  \makebox[#1]{%
    \kern\dimexpr-\fboxsep-\fboxrule\relax
    \fbox{\usebox{\fminipagebox}}%
    \kern\dimexpr-\fboxsep-\fboxrule\relax
  }\par\kern#2
 }

\def\letters{a,b,c,d,e,f,g,h,i,j,k,l,m,n,o,p,q,r,s,t,u,v,w,x,y,z}
\def\Letters{A,B,C,D,E,F,G,H,I,J,K,L,M,N,O,P,Q,R,S,T,U,V,W,X,Y,Z}
\makeatletter
\@for \@l:=\Letters \do{%
  \expandafter\edef\csname\@l bb\endcsname{\noexpand\ensuremath{%
  \noexpand\mathbb{\@l}}}%
  \expandafter\edef\csname\@l bf\endcsname{{\noexpand\bf \@l}}%
  \expandafter\edef\csname\@l cal\endcsname{\noexpand\ensuremath{%
  \noexpand\mathcal{\@l}}}%
  \expandafter\edef\csname\@l eu\endcsname{\noexpand\ensuremath{%
  \noexpand\EuScript{\@l}}}%
  \expandafter\edef\csname\@l frak\endcsname{\noexpand\ensuremath{%
  \noexpand\mathfrak{\@l}}}%
  \expandafter\edef\csname\@l rm\endcsname{{\noexpand\rm \@l}}%
  \expandafter\edef\csname\@l scr\endcsname{\noexpand\ensuremath{%
  \noexpand\mathscr{\@l}}}%
}
\@for \@l:=\letters \do{%
  \expandafter\edef\csname\@l bf\endcsname{{\noexpand\bf \@l}}%
  \expandafter\edef\csname\@l frak\endcsname{\noexpand\ensuremath{%
  \noexpand\mathfrak{\@l}}}%
  \expandafter\edef\csname\@l scr\endcsname{\noexpand\ensuremath{%
  \noexpand\mathscr{\@l}}}%
}
\makeatother
\definecolor{shadecolor}{rgb}{0.6, 0.6, 0.6} 
\definecolor{red}{rgb}{1,0,0.2} 
\definecolor{darkgreen}{rgb}{0, 0.6, 0}

\newcommand{\isdef}{\mathrel{\mathrel{\mathop:}=}}
\newcommand{\defis}{\mathrel{=\mathrel{\mathop:}}}

\newcommand{\R}{\mathbb{R}}
\newcommand{\N}{\mathbb{N}}

\newcommand{\bs}{\boldsymbol}
\DeclareMathOperator{\diam}{diam}

\DeclareMathOperator{\spn}{span}

\DeclareMathOperator*\argmin{\arg\min}

\DeclareMathOperator\dist{dist}
\DeclareMathOperator\supp{supp}

\renewcommand{\mid}{\hspace*{0.2ex}|\hspace*{0.2ex}}

\newcommand{\myatop}[2]{\genfrac{}{}{0pt}{}{#1}{#2}}

\newcommand{\xx}{\boldsymbol{x}}

\newcommand{\aalpha}{\boldsymbol{\alpha}}

\begin{document}
\title{Samplet compression for conditionally positive definite kernels and universal Kriging}
 \thanks{SA and MM gratefully acknowledge support from the 
   Swiss National Science
Foundation (SNSF)
for the financial support through
the SNSF starting grant ``Multiresolution
methods for unstructured data'' (TMSGI2\_211684).}
 \author{Sara Avesani}
\address{
Sara Avesani}
\email{sara.avesani@usi.ch} 
 \author{R\"udiger Kempf}
\address{
R\"udiger Kempf}
\email{ruediger.kempf@uni-bayreuth.de}
\author{Michael Multerer}
\address{
Michael Multerer}
\email{michael.multerer@usi.ch}
 \author{Holger Wendland}
\address{
Holger Wendland}
\email{holger.wendland@uni-bayreuth.de}
\keywords{
Conditionally positive definite kernel, image registration, mesh deformation,
samplet basis, universal Kriging}


\begin{abstract}
We present a samplet-based framework for the efficient numerical solution of
saddle-point systems arising from conditionally positive definite (CPD) kernel
approximation in general and universal Kriging in particular. 
The vanishing moment
property of samplets as well as the particular structure of the associated scaling
distributions, which correspond to discrete orthogonal polynomials, 
allow for a numerically favorable representation of these
saddle-point systems. Concretely, they enable a natural null-space reduction of 
the indefinite saddle-point system to a (large) linear system for the detail 
coefficients and a small triangular system for the polynomial coefficients. 
We derive error 
bounds for the approximation by polyharmonic splines in Beppo-Levi spaces and show 
that the detail coefficients span precisely the subspace on which the CPD kernel is 
positive definite, rendering the reduced block symmetric positive definite. 
In view of the quasi-sparsity of the samplet-transformed
kernel matrix for asymptotically smooth kernels, the resulting method achieves 
$\mathcal{O}(N\log N)$ cost for the assembly and the storage of the saddle-point system. 
The reduced system can efficiently be solved by a sparse Cholesky factorization. 
We illustrate the framework with three applications, namely Gaussian process 
regression with {generalized} covariances, landmark-based image registration via 
samplet-compressed thin plate splines, and three-dimensional mesh deformation.
\end{abstract}

\maketitle
\section{Introduction}

\subsection{Motivation}
Kernel methods are the de facto go-to method for reconstructing
multi-dimensional functions from scattered data \cite{Schaback2006}. Most often,
the choice is to use \emph{positive definite} kernels since they lead, for any
configuration of data sites, to unique interpolants that are provably best
approximations or optimal recoveries of the target function in the native space
of the chosen kernel, \cite[Chapter 13]{Wendland_2004}. Kernel methods can be
seen as the extension of univariate spline theory. Historically however, this
extension is not done by positive definite kernels, but by \emph{conditionally
positive definite} kernels, in particular in connection with \emph{radial basis
functions} \cite{Duchon1976,Duchon1977,Duchon1978,Madych1990,Micchelli1986}.

A key motivation for employing conditionally positive definite (CPD) kernels
arises from the fact that approximants reproduce, e.g., low-degree polynomials
and naturally incorporate polynomial trends in the data. Consequently, CPD-based
schemes often achieve excellent approximation quality without the need to tune
shape parameters, in contrast to usual positive definite kernels. This
parameter-free nature is particularly attractive in large-scale numerical
settings where parameter selection can otherwise dominate the computational
effort. Furthermore, there is no clear black-box algorithm to obtain the best
shape parameter \cite{Fasshauer2007,Nassajian2023}.

Despite these advantages, the numerical treatment of CPD kernels presents
additional challenges compared to the positive definite case. The approximation
system involves a saddle-point structure, coupling the kernel matrix with
polynomial constraints, and is both dense and indefinite. For large data sets,
solving these augmented systems efficiently becomes a central computational
task. Developing fast numerical methods that exploit the structure of
CPD kernels, while maintaining stability and scalability, is therefore of
considerable practical importance.

\subsection{Literature review}
Typical examples of CPD functions are \emph{multiquadrics}, see 
\cite{Hardy1971,Hardy1990}, and \emph{thin plate splines}, see 
\cite{Duchon1976,Duchon1977,Duchon1978}. The latter can be seen as the immediate
extension of natural cubic splines in one dimension. These kernels are positive
definite on a certain, choosable finite-dimensional linear space, and essentially
let us use the approximation framework of positive definite kernels by retaining
the same numerical and analytical structure.

CPD kernels appear under different names across application domains. 
In geostatistics, and in particular in the context of universal Kriging,
the corresponding predictor is referred to as \emph{intrinsic Kriging},
where the generalized covariance matrix is conditionally positive definite;
see, for instance, \cite{Cressie1993,Matheron1973}.
In image analysis, landmark-based registration by thin plate splines goes back
to \cite{Bookstein1989}. In both cases, the classical exact formulation leads to
a dense, indefinite saddle-point system which requires $\mathcal{O}(N^3)$
computational work and $\mathcal{O}(N^2)$ memory.

Several approaches have been proposed to reduce this computational burden. One
line of work replaces the original global model with a sparse one: covariance
tapering and SPDE-based representations in geostatistics,
see \cite{Furrer2006,Lindgren2011}, and compactly supported radial basis
functions in image registration, see \cite{Fornefett2001}. A second line retains
the original kernel and accelerates the associated linear algebra using
hierarchical techniques, such as fast multipole methods and
$\mathcal{H}$-matrices,
see \cite{Beatson2001,Litvinenko2011}. 
Closely related, a multilevel basis with discrete vanishing moments has been used in
\cite{CastrillonCandas2013,CastrillonCandas2024}.
For an overview of
fast methods in this context, we refer to \cite{Beatson1997}. For the numerical
solution of saddle-point systems in general, we refer to \cite{BGL05}.

\subsection{The approach taken}
The setting of CPD approximation naturally allows for the use of
\emph{samplet matrix compression}, 
see \cite{Baroli2024,harbrecht2022samplets,Harbrecht2024}. Samplets are discrete
signed measures constructed such that all polynomials up to a certain,
prescribed degree vanish. This vanishing moment property allows the compression
of kernel matrices, demonstrated in \cite{Avesani2025,Harbrecht2025} for
positive definite kernels.

The purpose of this work is to introduce and demonstrate the use of samplet
compression for the saddle-point problem appearing for CPD kernels. Samplet
compression provides an algebraic alternative to the approaches above that
retains the exact CPD kernel while yielding a quasi-sparse representation
which can be compressed resulting in a cost of
$\mathcal{O}(N\log N)$ for matrix assembly and storage. Beyond compression,
the vanishing moment property of the samplets, together with the particular
structure of the associated scaling distributions, decouple the polynomial
constraints from the kernel part exactly at the discrete level. A null-space
approach, see \cite{BGL05}, then reduces the indefinite saddle-point system
to a system for the detail coefficients and a small triangular system for the
polynomial coefficients, where the detail coefficients span precisely the
subspace on which the CPD kernel is positive definite. The reduced system is
therefore symmetric positive definite and can efficiently be solved by a
sparse Cholesky factorization.

\subsection{Contributions}
The contributions of this article are the following.
\begin{itemize}
  \item We derive, for polyharmonic splines in Beppo-Levi spaces, error
    estimates for the regularized least-squares approximation that account
    explicitly for the observational noise.
\item We show that, in samplet coordinates, the polynomial constraint matrix
  of the CPD saddle-point system exhibits an exact block structure, with the
  polynomial information confined to the first few scaling distributions.
  \item Exploiting this structure, we reduce the indefinite saddle-point system
    to a small triangular system for the polynomial coefficients and a symmetric
    positive definite system for the detail coefficients, which spans exactly
    the subspace where the CPD kernel is positive definite.  
  \item We verify that polyharmonic splines and multiquadrics are
    asymptotically smooth, so that the samplet-transformed kernel matrix is
    quasi-sparse.
  \item We demonstrate the resulting method in three settings, namely universal
    Kriging on a large scattered data set, landmark-based image warping by thin
    plate splines, and three-dimensional mesh deformation, in each case at
    problem sizes that are not accessible to classical dense direct solvers.
\end{itemize}

\subsection{Outline}
The remainder of this article is organized as follows.
Section~\ref{sec:uk_CPD} collects the background on universal Kriging and 
CPD kernels, states the equivalence between CPD approximation and Kriging,
and derives the error estimates for regularized least-squares approximation
with polyharmonic splines. Section~\ref{sec:SampletsCPD} introduces samplets
and shows how the CPD saddle-point system decouples in samplet coordinates.
Section~\ref{sec:numerics} reports the numerical experiments in universal
Kriging and registration in two and three dimensions.
Section~\ref{sec:conclusion} summarizes the main findings of the
paper.

\section{Universal Kriging and conditionally positive definite kernels}\label{sec:uk_CPD}
\subsection{Universal Kriging}\label{subsec:UniversalKriging}
Let \(X=\{{\bs x}_1,\ldots,{\bs x}_N\}\subseteq D\) denote
a set of \emph{data sites} within a bounded Lipschitz domain
\( D\subset\Rbb^d\). Associated to the data sites, we consider the 
\emph{observations} \({\bs y}=[y_1,\ldots,y_N]^\intercal\in\Rbb^N\).
Consider the latent Gaussian random field model
\begin{equation}\label{eq:uk-model}
  Z({\bs x}) = m({\bs x}) + G({\bs x}),
\end{equation}
where the expectation satisfies
\(m\in\Pcal_q
\isdef\operatorname{span}\{{\bs x}^{\bs\alpha}:{\bs\alpha}
\in\Nbb^d_0,\|{\bs\alpha}\|_1\leq q\}\)
and \(G\) is a centered Gaussian random field with covariance function
\(\Kcal\colon D\times D\to\Rbb\). We assume that the
observations adhere to the relation
\[
  y_i = Z({\bs x}_i) + \varepsilon_i,
  \quad \varepsilon_i \stackrel{\text{iid}}{\sim} \Ncal(0,\nu^2),
  \quad i=1,\dots,N,
\]
which is a standard model, e.g., in spatial geostatistics, see
\cite{DR07,Ste99}.

Given a basis \(p_1,\ldots,p_n\), \(n\isdef\dim\Pcal_q\), of \(\Pcal_q\)
as well as the \emph{basis of kernel translates}
\(\phi_i\isdef\Kcal({\bs x}_i,\cdot)\), \(i=1,\ldots,N\), we introduce
the \emph{feature vectors}
\[
  {\bs p}({\bs x})\isdef[p_1({\bs x}),\ldots,p_n({\bs x})]\quad\text{and}\quad
  {\bs k}({\bs x})\isdef[\phi_1({\bs x}),\ldots,\phi_N({\bs x})].
\]
Then, the posterior expectation of \(Z\) given \({\bs y}\) reads
\begin{equation}\label{eq:mean-as-interpolant}
  \mu({\bs x})\isdef\Ebb[Z\mid{\bs y}]({\bs x})
  ={\bs k}({\bs x}){\bs c}+{\bs p}({\bs x}){\bs d},
\end{equation}
where the vectors \({\bs c}\isdef[c_1,\ldots, c_N]^\intercal\in\Rbb^N\)
and \({\bs d}\isdef[d_1,\ldots,d_{n}]^\intercal\in\Rbb^n\) solve the saddle
point system
\begin{equation}\label{eq:saddlepoint}
  \begin{bmatrix}{\bs K}_\nu & {\bs P}
  \\ {\bs P}^\intercal &{\bs 0}\end{bmatrix}
  \begin{bmatrix}{\bs c}\\ {\bs d}\end{bmatrix}
  =\begin{bmatrix}{\bs y}\\ {\bs 0}\end{bmatrix}
\end{equation}
with
\[
{\bs K}_\nu\isdef({\bs K}+\nu^2{\bs I})
\in\Rbb^{N\times N}, \quad \text{and}
  \quad
  {\bs P}\isdef[{\bs p}({\bs x}_i)]_{i=1}^N\in\Rbb^{N\times n},
\]	
respectively, where $  {\bs K}\isdef[{\bs k}({\bs x}_i)]_{i=1}^N$
and $ \bs I\in\Rbb^{N\times N} $ is the identity matrix. 

Similarly, the posterior covariance of \(Z\) given \({\bs y}\) is
\begin{equation}\label{eq:Kriging-variance}
  \begin{aligned}
    C({\bs x},{\bs x}')&\isdef
  \Ebb\big[\big(Z({\bs x})-\mu({\bs x})\big)
    \big(Z({\bs x}')-\mu({\bs x}')\big)\big]\\
                       &= \Kcal({\bs x},{\bs x}')
  - \begin{bmatrix}{\bs k}({\bs x}) & {\bs p}({\bs x})\end{bmatrix}
  \begin{bmatrix}{\bs K}_\nu& {\bs P}\\ 
    {\bs P}^\intercal & {\bs 0}\end{bmatrix}^{-1}
    \begin{bmatrix}{\bs k}^\intercal({\bs x}')\\
    {\bs p}^\intercal({\bs x}')\end{bmatrix}.
  \end{aligned}
\end{equation}
{The derivation of \eqref{eq:mean-as-interpolant}
and \eqref{eq:Kriging-variance} is classical, see \cite[Chapter 1.5]{Ste99}
and \cite[Chapter 32]{wackernagel2003multivariate}. In the latter reference,
\(\Kcal\) is only required to be conditionally positive definite in the sense of
Definition~\ref{subsec:CPD-interp}. It is then no longer the covariance
function of \(Z\), but rather prescribes the variances
\({\bs c}^\intercal{\bs K}{\bs c}\) of the linear combinations
\(\sum_{i=1}^N c_i Z({\bs x}_i)\) satisfying \({\bs P}^\intercal{\bs c}={\bs 0}\).
This generalization is known as \emph{intrinsic Kriging} and results in the
same saddle point system \eqref{eq:saddlepoint}. The function \(\Kcal\) is then
referred to as a \emph{generalized covariance function}.}
The classical representation of \eqref{eq:Kriging-variance} found there is
retrieved by inserting the Cholesky factorization
\[
  \begin{bmatrix}{\bs K}_\nu & {\bs P}\\
          {\bs P}^\intercal & {\bs 0}\end{bmatrix}=
          \begin{bmatrix}{\bs I} & {\bs 0}\\[0.2em]{\bs P}^\intercal{\bs K}_\nu^{-1} & 
          {\bs I}\end{bmatrix}
  \begin{bmatrix}{\bs K}_\nu & {\bs 0}\\[0.2em]{\bs 0} & 
  {\bs S}\end{bmatrix}
  \begin{bmatrix}{\bs I} & {\bs K}_\nu^{-1}{\bs P}\\[0.2em]{\bs 0} & {\bs I}\end{bmatrix}
\]
with
the Schur complement \({\bs S_{\bs K_{\nu}}}\isdef-{\bs P}^\intercal \bs K_\nu ^{-1}{\bs P}\).

From the Cholesky factorization, it is evident that the matrix is invertible, 
whenever \(\nu^2>0\) and \({\bs P}\) has full column rank.
In this case, it is well known that the inverse is given by
\[
\begin{bmatrix}{\bs K}_\nu & {\bs P}\\
          {\bs P}^\intercal & {\bs 0}\end{bmatrix}^{-1}
  =
  \begin{bmatrix}
    {\bs K}_\nu^{-1}
    + {\bs K}_\nu^{-1}{\bs P}
      {\bs S_{\bs K_{\nu}}}^{-1}
      {\bs P}^\intercal{\bs K}_\nu^{-1}
    &
    -{\bs K}_\nu^{-1}{\bs P}
      {\bs S_{\bs K_{\nu}}}^{-1}
    \\[0.4em]
    -{\bs S}^{-1}_{\bs K_{\nu}}
      {\bs P}^\intercal{\bs K}_\nu^{-1}
    &
    {\bs S}^{-1}_{\bs K_{\nu}}
  \end{bmatrix}.
\]

For the existence and uniqueness of a solution to
\eqref{eq:saddlepoint}, the weaker concept of conditionally positive
definiteness is sufficient, which will be addressed in the next section.
\subsection{Conditionally positive definite kernel approximation}
\label{subsec:CPD-interp}
We now consider the posterior expectation \eqref{eq:mean-as-interpolant},
at first in the noiseless case \(\nu=0\), within the context of kernel
interpolation.

\begin{definition}
For $ n \in \N $ let $ \mathcal{P}\isdef \spn\{ p_1, \dots, p_n \} $ be 
a linear space of functions $ p_i\colon D \to \Rbb $. 
A symmetric function $ \Kcal\colon  D \times  D \to \Rbb $ 
is \emph{$\mathcal{P}$-conditionally positive semidefinite}, 
if, for any $ N \in \N $ and any pairwise distinct points 
$ \xx_1, \dots, \xx_N \in  D $, the kernel matrix 
\({\bs K}\isdef[\Kcal({\bs x}_i,{\bs x}_j)]_{i,j=1}^N\) satisfies
\begin{equation}\label{eq:condPosDef}
{\bs c}^\intercal{\bs K}{\bs c}\geq 0
\quad\text{for any }{\bs c}\in\ker{\bs P}^\intercal,
\end{equation}
where \({\bs P}\isdef[p_j(\xx_i)]_{\myatop{i=1,\ldots,N}{j=1,\ldots,n}}\).
The function $\Kcal $ is
\emph{$\mathcal{P}$-conditionally positive definite (CPD)}, 
if the inequality \eqref{eq:condPosDef} holds strictly, 
unless $ {\bs c} $ is zero.
\end{definition}

\begin{example}\label{ex:CPDKernels}
Famous examples of $ \Pcal_q $-CPD kernels are the \emph{polyharmonic splines}
\begin{align*}
    \Kcal(\xx,\bs y) = \begin{cases}\| \xx - \bs y \|_2^\ell, 
    &\quad \text{if } \ell \text{ is odd} \\ \| \xx - \bs y \|_2 ^\ell 
    \log\| \xx - \bs y \|_2, &\quad \text{if } \ell \text{ is even}  
  \end{cases}
\end{align*}
with {$ q = \lfloor \frac{\ell}{2} \rfloor + 1$} and the \emph{multiquadrics}
\begin{align*}
  \Kcal(\xx, \bs y) = (-1)^{ \lceil \beta \rceil}(c^2 + 
  \| \xx - \bs y \|_2^2)^{\beta}, 
    \quad c,\beta>0,\beta \notin \Nbb,
\end{align*}
with $ q = \lceil \beta \rceil-1$.
\end{example}

Similar to the case of positive definite kernels, 
we can associate a \emph{native space} $ \Ncal_{\Kcal}( D) $ to any
CPD kernel $ \Kcal $. To this end, we start from the linear space
\begin{equation}\label{eq:FiniteDualSpace}
\begin{aligned}
L_{\Pcal} ( D) \isdef 
\bigg\{ \lambda = \sum_{j=1}^{N} c_j \delta_{\xx_j}  : & 
\ c_1,\ldots, c_N \in \Rbb, \xx_1, \dots, \xx_N \in  D,\\
& N \in \Nbb 
\text{ and } \lambda(p) = 0 \text{ for all } p \in \Pcal \bigg\}
\subset[C(D)]'
\end{aligned}
\end{equation}
and equip it with the inner product 
\begin{align*}
\langle \lambda,\kappa\rangle_{\Kcal} \isdef 
\sum_{i=1}^{N} \sum_{j=1}^{M} c_i c_j' \Kcal(\xx_i, \bs y_j),\quad
\text{where }\lambda=\sum_{i=1}^{N} c_i \delta_{\xx_i},\ 
\kappa=\sum_{j=1}^{M} c_j' \delta_{{\bs y}_j}.
\end{align*}
The canonical norm on \(L_{\Pcal} ( D)\) is given by 
\(\|\lambda\|_\Kcal\isdef\sqrt{\langle \lambda,\lambda\rangle_{\Kcal}}\).
 
 \begin{definition}
 Let $ \Kcal\colon D\times D\to\Rbb $ be a $ \Pcal $-CPD 
 kernel. The \emph{native space} $ \Ncal_{\Kcal}( D)$ of $ \Kcal $ 
 is defined as
 \begin{align*}
    \Ncal_K( D) \isdef \big\{ f \in C( D)  :  
    | \lambda(f) | <\infty \text{ for all } 
  \lambda \in L_{\Pcal}( D)\text{ with }  \| \lambda \|_{\Kcal}=1\big\}.
 \end{align*}
 A semi-norm on $ \Ncal_{\Kcal}( D)$ is given by duality according to 
 \begin{align*}
| f |_{\Ncal_{\Kcal}( D)}\isdef \sup_{\lambda 
\in L_{\Pcal}( D) \setminus \{0\}} 
\frac{|\lambda(f)|}{\| \lambda \|_{\Kcal}}.
 \end{align*}
 \end{definition}

In particular, the polyharmonic spline allows for an identification of its 
native space with a classical Beppo-Levi space, see, e.g.,
\cite{Wendland_2004}.

\begin{theorem}
Let $ k\in \N $ be such that $ k - d \notin \{0,2,4,\dots\} $ if $ k $ is odd.
Let $\Kcal $ be a polyharmonic spline that is $ \Pcal_{q} $-conditionally 
positive definite, where $ q = k-1 $. Then the native space 
$ \Ncal_{\Kcal}( D) $ coincides with the Beppo-Levi space 
\begin{align*}
  B\!L_{k}( D) \isdef \big\{ f \in L_1^{\mathrm{loc}}( D) :
  D^{\aalpha} f \in L_2( D) \text{ for all } |\aalpha | = k\big\}
\end{align*}
equipped with the semi-norm 
\begin{align*}
    |f|_{B\!L_{k}( D)}\isdef \bigg[\sum_{\|\aalpha\|_1 = k} 
    \frac{k!}{\aalpha!} \| D^{\aalpha} f \|_{L_2( D)}^2 
  \bigg]^{\frac{1}{2}}.
\end{align*}
The semi-norms $ |\cdot|_{\Ncal_{\Kcal}( D)} $ and 
$ |\cdot|_{B\!L_{k}( D)}$ are equivalent.
\end{theorem}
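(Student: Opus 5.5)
The plan is the classical Fourier-analytic identification of the native space, as in \cite[Chapter 10]{Wendland_2004}. We first treat $D=\Rbb^d$ and then pass to bounded Lipschitz domains by extension.

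The first step is to compute the generalized Fourier transform of the polyharmonic spline. For $\Phi(\xx)=\|\xx\|_2^{2k-d}$, or $\|\xx\|_2^{2k-d}\log\|\xx\|_2$ in the even case, with $\ell=2k-d$, the generalized Fourier transform of order $k$ is
\[
\widehat\Phi(\bs\omega)=c_{k,d}\,\|\bs\omega\|_2^{-2k}.
\]
Here $c_{k,d}$ has a fixed sign; after multiplying $\Kcal$ by $(-1)^{\lceil\ell/2\rceil}$ or $(-1)^{k-d/2+1}$ as needed, we may take $c_{k,d}>0$. The exclusion $k-d\notin\{0,2,4,\dots\}$ for odd $k$ is exactly what rules out degenerate exponents (even integers), where the kernel would be a polynomial rather than CPD. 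This computation is done by analytic continuation of the Fourier transform of $\|\xx\|^{\beta}$ in $\beta$. The key fact, for $\lambda=\sum c_j\delta_{\xx_j}\in L_{\Pcal_{k-1}}$, is the representation
\[
\|\lambda\|_\Kcal^2=(2\pi)^{-d/2}\int_{\Rbb^d}\Big|\sum_j c_j e^{i\xx_j^\intercal\bs\omega}\Big|^2\widehat\Phi(\bs\omega)\,d\bs\omega .
\]
It is justified because the moment conditions make the trigonometric sum vanish to order $k$ at $\bs\omega=0$, so the singularity $\|\bs\omega\|^{-2k}$ is integrable.

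The second step characterizes the native space on $\Rbb^d$ as $\{f\in C(\Rbb^d):\widehat f\,\widehat\Phi^{-1/2}\in L_2\}$, with $|f|_{\Ncal_\Kcal}^2=(2\pi)^{-d/2}\int|\widehat f|^2/\widehat\Phi$, where $\widehat f$ is understood as a generalized Fourier transform modulo $\Pcal_{k-1}$. By duality with the previous formula this follows as in the positive definite case, and it is the standard theorem for CPD kernels with generalized Fourier transform. Plancherel together with the multinomial identity
\[
\|\bs\omega\|_2^{2k}=\sum_{\|\aalpha\|_1=k}\frac{k!}{\aalpha!}\bs\omega^{2\aalpha}
\]
then gives $|f|^2_{\Ncal_\Kcal(\Rbb^d)}=c_{k,d}^{-1}|f|^2_{B\!L_k(\Rbb^d)}$. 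This is exactly where the weights $k!/\aalpha!$ enter. One also needs that $B\!L_k(\Rbb^d)$ functions are continuous, which holds since $k>d/2$ is implicit in $2k-d=\ell>0$ (Sobolev embedding modulo polynomials).

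The third step, the localization to the bounded Lipschitz domain $D$, is the main obstacle. The native space on $D$ is the space of restrictions of $\Ncal_\Kcal(\Rbb^d)$, with the minimal-extension seminorm; this is the restriction/extension theorem for native spaces, proved by taking the closure of $L_{\Pcal}(D)\subset L_{\Pcal}(\Rbb^d)$. Hence
\[
|f|_{\Ncal_\Kcal(D)}=\min\{|g|_{\Ncal_\Kcal(\Rbb^d)}: g|_D=f\}\geq c\,|f|_{B\!L_k(D)}.
\]
For the reverse inequality I would invoke a bounded extension operator $E:B\!L_k(D)\to B\!L_k(\Rbb^d)$ with $|Ef|_{B\!L_k(\Rbb^d)}\le C|f|_{B\!L_k(D)}$. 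Such an operator can be obtained from Stein's extension applied to $f-\pi f$, where $\pi f\in\Pcal_{k-1}$ is an averaged Taylor polynomial, combined with a Bramble--Hilbert/Deny--Lions estimate $\|f-\pi f\|_{H^k(D)}\le C|f|_{B\!L_k(D)}$. Writing $Ef=\pi f+E_{\mathrm{Stein}}(f-\pi f)$ and using that polynomials have vanishing $k$-th derivatives yields the bound. Equality of the sets and equivalence of the seminorms, with constants depending on $k$, $d$ and $D$, follows. Here the Lipschitz property of $D$ (cone condition) is essential.
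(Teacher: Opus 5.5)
Your proposal is correct and follows the standard route. The paper gives no proof of its own but defers to \cite{Wendland_2004}, where the identification on $\Rbb^d$ is obtained exactly as you describe: the generalized Fourier transform $c_{k,d}\|\bs\omega\|_2^{-2k}$, the Fourier characterization of the native space, and the multinomial identity. The passage to a bounded Lipschitz domain likewise rests on the restriction/extension theorems for native spaces together with an extension operator for $B\!L_k(D)$ of the kind you construct.

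One remark in your first step is inaccurate but does not affect the argument. If $k$ is odd and $k-d\in\{0,2,4,\dots\}$, then $d$ is odd and the exponent $2k-d$ is odd, so the printed hypothesis does not rule out degenerate even exponents. What your proof actually uses is $2k>d$ and that $\Kcal$ is the spline of exponent $2k-d$, with the logarithmic factor exactly when $d$ is even; this is the intended reading of the statement.
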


We now turn our attention to the interpolation problem with CPD kernels. 
Concretely, we focus on the interpolation problem
\begin{align*}
&\text{Find $ s_{X} \in \operatorname{span}\{\phi_1,\ldots,\phi_N\} +
\Pcal $ such that}\\
&\qquad s_{X}(\xx_i) = 
y_i
\quad\text{for }i=1,\ldots,N.
\end{align*}

Obviously, if a solution exists to the interpolation problem, 
it can be written as 
\begin{equation}\label{eq:augsys}
s_{X}=\sum_{j=1}^N c_j \Kcal(\cdot, {\bs x}_j)+\sum_{k=1}^{
n} d_k p_k.
\end{equation}
The coefficients $ \bs c \in \Rbb^N$ and $ \bs d \in \Rbb^n $ 
can be computed by solving the saddle point system
\begin{equation}\label{eq:saddlesystem_noiseless}
\begin{bmatrix}{\bs K} & {\bs P}\\ {\bs P}^\intercal &{\bs 0}\end{bmatrix}
\begin{bmatrix}{\bs c}\\ {\bs d}\end{bmatrix}=\begin{bmatrix}{\bs y}\\ {\bs 0}
\end{bmatrix},
\end{equation}
which coincides with \eqref{eq:saddlepoint} in the noiseless case \(\nu=0\).

The existence and uniqueness of a solution of the linear system 
\eqref{eq:saddlesystem_noiseless} depends on the particular
choices of $ \Pcal $ and $ X $. 

\begin{definition} The set \(X=\{{\bs x}_1,\ldots,{\bs x}_N\}\subseteq  D\)
  is called 
\emph{$\Pcal$-unisolvent}, if the associated generalized Vandermonde matrix 
\({\bs P}=[p_j(\xx_i)]_{\myatop{i=1,\ldots,N}{j=1,\ldots,n}}\) 
has full column rank.
\end{definition}
The following theorem, which gives a criterion for the existence of a unique
solution to \eqref{eq:saddlesystem_noiseless} comes from \cite{Wendland_2004}.
\begin{theorem}
If $ X $ is $ \Pcal $-unisolvent and \(\Kcal\) is \(\Pcal\)-CPD,
then the linear system \eqref{eq:saddlesystem_noiseless} has a unique solution.  
\end{theorem}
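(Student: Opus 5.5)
Since the system \eqref{eq:saddlesystem_noiseless} is square, of size $(N+n)\times(N+n)$, it suffices to show that the homogeneous system has only the trivial solution. Injectivity then gives invertibility, hence existence and uniqueness for every right-hand side. We therefore assume $\bs c\in\Rbb^N$ and $\bs d\in\Rbb^n$ satisfy
\[
{\bs K}{\bs c}+{\bs P}{\bs d}={\bs 0},\qquad {\bs P}^\intercal{\bs c}={\bs 0},
\]
and aim to conclude that $\bs c=\bs 0$ and $\bs d=\bs 0$.

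\textbf{Step 1: $\bs c=\bs 0$.} Multiply the first equation from the left by $\bs c^\intercal$. The second equation gives $\bs c^\intercal{\bs P}{\bs d}=({\bs P}^\intercal\bs c)^\intercal\bs d=0$, so
\[
\bs c^\intercal{\bs K}\bs c=0\quad\text{with}\quad \bs c\in\ker{\bs P}^\intercal .
\]
Since $\Kcal$ is $\Pcal$-CPD and the points of $X$ are pairwise distinct, the quadratic form $\bs c^\intercal{\bs K}\bs c$ is strictly positive on $\ker{\bs P}^\intercal\setminus\{\bs 0\}$. Hence $\bs c=\bs 0$.

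\textbf{Step 2: $\bs d=\bs 0$.} With $\bs c=\bs 0$, the first equation reduces to ${\bs P}\bs d=\bs 0$. Because $X$ is $\Pcal$-unisolvent, ${\bs P}$ has full column rank, so its kernel is trivial and $\bs d=\bs 0$.

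The only point needing care is the cancellation in Step 1, where the constraint ${\bs P}^\intercal\bs c=\bs 0$ must be used to remove the polynomial term $\bs c^\intercal{\bs P}\bs d$. We also need to check that the CPD definition applies to this particular point set $X$, which holds because its points are pairwise distinct. The rest is elementary linear algebra, so no serious obstacle is expected.
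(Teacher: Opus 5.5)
Your proof is correct. It is the standard injectivity argument: the homogeneous system gives $\bs c^\intercal{\bs K}\bs c=0$ on $\ker{\bs P}^\intercal$, the $\Pcal$-CPD property forces $\bs c=\bs 0$, and unisolvency then forces $\bs d=\bs 0$. The paper does not prove this theorem itself but cites Wendland's monograph, and your argument is the one given there.
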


If we deal with noisy data, as outlined in Subsection
\ref{subsec:UniversalKriging}, interpolation is not desired as it leads to
overfitting. In this case, the linear system \eqref{eq:saddlepoint} is
the first order condition of a regularized least-squares approximation.

For $ \nu\in\Rbb $, define the functional 
$ J_{\nu}\colon \Ncal_{\Kcal}( D) \to \R $
as 
\begin{align*}
J_{\nu}(s) \isdef \sum_{j=1}^{N} | y_j - s(\xx_j) |^2 
+ \nu^2 | s |_{\Ncal_{\Kcal}( D)}^2.
\end{align*}
The regularized least-squares (RLS) problem is then given as
\begin{align*}
&\text{Find $ s_{X,\nu} \in \Ncal_{\Kcal}( D) $ such that}\\
&\qquad s_{X,\nu} = 
\argmin_{s \in \Ncal_{\Kcal}( D)} J_{\nu}(s).
\end{align*}

The \emph{representer theorem}, see \cite{Wahba1990}, states that the RLS 
problem has a unique solution $s_{X,\nu}$ and it can be expressed as
\begin{equation}\label{eq:RLSsolution}
s_{X,\nu}=\sum_{j=1}^N c_j \Kcal(\cdot, {\bs x}_j)+\sum_{k=1}^{n} d_k p_k,
\end{equation}
where the vectors \({\bs c}\isdef[c_1,\ldots, c_N]^\intercal\in\Rbb^N\), and 
\({\bs d}\isdef[d_1,\ldots,d_{n}]^\intercal\in\Rbb^n\) solve the saddle
point system
\eqref{eq:saddlepoint}. 

The deterministic CPD interpolation problem described above
and the universal Kriging predictor
\eqref{eq:mean-as-interpolant} coincide for \(\nu  = 0\),
see \cite{Matheron1981}. 
This equivalence also holds for the noisy case 
$ \nu \neq 0 $, see \cite[Chapter 6]{ras_wil_06}.

\subsection{Error Estimation} \label{subsec:Error}
To formalize the approximation methods introduced above, we will use the linear
operators $ \Ical_X, \Qcal_{X,\nu}\colon \R^N \to \Ncal_{\Kcal}( D) $
representing interpolation and RLS approximation, respectively. They are given
by
\begin{align*}
\Ical_X(\bs y) = s_{X} \quad \text{and} \quad \Qcal_{X,\nu}(\bs y) = s_{X,\nu}
\end{align*}

The error analysis for interpolation with CPD kernels is well-known, see, e.g.,
\cite[Chapter 11.3]{Wendland_2004}. Particularly, one can expect exponential 
convergence \cite[Theorem 11.22]{Wendland_2004}, in the case of interpolation by
multiquadrics. On the other hand, RLS approximation with CPD kernels is less
studied. We will focus on polyharmonic splines because of the characterization
of their native space and take particular care of keeping the information on
the noise. The proofs presented here closely follow the arguments in
\cite{LeGia2026}. We start with some preliminary results.

\begin{lemma}\label{lem:PropertiesRLSApproximation}
Let $  D \subseteq \R^d $ be a bounded Lipschitz domain and let 
$ X=\{\xx_1, \dots, \xx_N\} \subseteq  D $ be a $ \Pcal_q $-unisolvent set
of data sites with $ q \in \N $. Let $ \Kcal $ be a CPD kernel of
$ B\!L_k( D) $ with \(k=q+1\). Assume that $ Z \in \Ncal_{\Kcal}( D)$ and let 
$ y_i = Z(\xx_i) + \varepsilon_i $, $i=1,\ldots, N $. Denote by 
$ \bs\varepsilon = [\varepsilon_1, \dots, \varepsilon_N]^\intercal$ 
the noise vector.
Then, there holds

\begin{align*}
    \|{\bs y} - \Qcal_{X,\nu}(\bs y) \|_{\ell_2(X)}^2 
    + \nu^2 | \Qcal_{X,\nu}( \bs y) |_{\Ncal_{\Kcal}( D)}^2 
    \leq \| \bs \varepsilon \|_{2}^2 + \nu^2 | Z |_{\Ncal_{\Kcal}( D)}^2.
\end{align*}
In particular, we have
\begin{equation}\label{eq:Qstability}
    |\Qcal_{X,\nu}(\bs y) |_{\Ncal_{\Kcal}( D)} \leq 
    \bigg( \frac{1}{\nu^2} \| \bs \varepsilon \|_{2}^2 
    + |Z|_{\Ncal_{\Kcal}( D)}^2 \bigg)^{\frac{1}{2}}.
  \end{equation}
\end{lemma}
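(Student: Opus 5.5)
The plan is to compare the minimal value of $J_\nu$ with its value at the true function $Z$. By the representer theorem quoted above, $s_{X,\nu}=\Qcal_{X,\nu}(\bs y)$ minimizes
\[
J_\nu(s)=\sum_{j=1}^N|y_j-s(\xx_j)|^2+\nu^2|s|_{\Ncal_{\Kcal}(D)}^2
\]
over all of $\Ncal_{\Kcal}(D)$. Since $Z\in\Ncal_{\Kcal}(D)$ by assumption, $Z$ is an admissible competitor, and minimality gives $J_\nu(\Qcal_{X,\nu}(\bs y))\le J_\nu(Z)$.

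The left-hand side of this inequality is, by definition, exactly
\[
\|\bs y-\Qcal_{X,\nu}(\bs y)\|_{\ell_2(X)}^2+\nu^2|\Qcal_{X,\nu}(\bs y)|_{\Ncal_{\Kcal}(D)}^2,
\]
where $\|\cdot\|_{\ell_2(X)}$ is the Euclidean norm of the vector of values at the data sites. On the right-hand side we have $y_j-Z(\xx_j)=\varepsilon_j$, so
\[
J_\nu(Z)=\|\bs\varepsilon\|_2^2+\nu^2|Z|_{\Ncal_{\Kcal}(D)}^2.
\]
Together these give the first inequality of Lemma~\ref{lem:PropertiesRLSApproximation}.

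For \eqref{eq:Qstability}, I will drop the nonnegative data-misfit term on the left, divide by $\nu^2$ (assuming $\nu\neq0$, which is implicit in the bound), and take square roots.

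The only point requiring care is that the minimizer exists, is unique, and lies in $\Ncal_{\Kcal}(D)$ even though $|\cdot|_{\Ncal_{\Kcal}(D)}$ is only a semi-norm. Its kernel is $\Pcal_q$, and $\Pcal_q$-unisolvency of $X$ makes the misfit term control that kernel, so $J_\nu$ is coercive on the quotient. This is exactly what the cited representer theorem provides, so I will invoke it rather than reprove it. The remaining steps are immediate.
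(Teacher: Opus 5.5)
Your proposal is correct and is essentially the paper's proof: the paper likewise uses that $\Qcal_{X,\nu}(\bs y)$ is the unique minimizer of $J_\nu$ to obtain $J_\nu(\Qcal_{X,\nu}(\bs y))\le J_\nu(Z)=\|\bs\varepsilon\|_2^2+\nu^2|Z|_{\Ncal_{\Kcal}(D)}^2$, and the stability bound follows by dropping the misfit term and dividing by $\nu^2$, as you describe. Your extra remark on existence and uniqueness (unisolvency controls the polynomial null space of the semi-norm, via the representer theorem) is a sound justification of a point the paper takes for granted.
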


\begin{proof}
Since $\Qcal_{X,\nu}(\bs y)$ is the unique minimizer of $ J_{\nu} $, we have 
\begin{align*}
\| {\bs y} - \Qcal_{X,\nu}(\bs y) \|_{\ell_2(X)}^2 
+ \nu^2 |\Qcal_{X,\nu}(\bs y) |_{\Ncal_{\Kcal}( D)}^2 
= J_{\nu}\big(\Qcal_{X,\nu}(\bs y)\big) \leq J_{\nu}(Z) 
= \| \bs \varepsilon \|_{2}^2 + \nu^2 |Z|_{\Ncal_{\Kcal}( D)}^2.
\end{align*}
\end{proof}
In case \({\bs\varepsilon}={\bs 0}\), Equation \eqref{eq:Qstability}
provides the stability of the projector \(\Qcal_{X,\nu}\) in the native space norm, i.e.,
\[
    |\Qcal_{X,\nu}(Z) |_{\Ncal_{\Kcal}( D)} \leq 
    |Z|_{\Ncal_{\Kcal}( D)}.
\]

In the following, we require a sampling inequality, taken from \cite{Arcangli2011}. 
It states an estimate in terms of the fill distance \begin{align*}
h_{X, D} \isdef \sup_{\xx \in  D} \min_{\xx_i \in X} \| \xx - \xx_i \|_2.
\end{align*}
of the set of sites $ X $.

\begin{theorem}\label{thrm:SamplingInequality}
Let $ D \subseteq \R^d$ be a bounded Lipschitz domain.
Let $p, r \in [1, \infty]$ 
and let $\gamma \isdef \max\{2, p, r\}$. Let $ k > d/2$ and set 
$\ell_0 \isdef k - d(1/2 - 1/r)_+$. Define
\begin{align*}
    \ell \isdef \begin{cases} \ell_0 & \text{if } r \in \N 
      \text{ and either } \ell_0 > 2 
    \text{ and } \ell_0 \in \N, \text{ or } r = 2, \\ 
    \lceil \ell_0 \rceil - 1 & \text{otherwise.} \end{cases}
\end{align*}
Then, there exist constants $h_0 > 0$ and $C > 0$ such that for all point sets 
$X\subseteq  D$ with fill distance 
$h_{X, D} \leq h_0$ and all $f \in B\!L_k( D)$, we have
\begin{align*}
    |f|_{W^s_r( D)} \leq C \big( h_{X, D}^{k - s - d(1/2 - 1/r)_+} 
    |f|_{B\!L_k( D)} + h_{X, D}^{d/\gamma - s} 
    \|f\|_{\ell_p(X)} \big)
\end{align*}
for all $0 \leq s \leq \ell$, where $s \in \N$ if $r = \infty$.
\end{theorem}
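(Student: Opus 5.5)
The sampling inequality of Theorem~\ref{thrm:SamplingInequality} is quoted from \cite{Arcangli2011}, so the plan is to reconstruct the standard argument behind it rather than prove anything new. The argument is local and has three parts: a local polynomial reproduction estimate on small star-shaped pieces of $D$, a covering of $D$ by such pieces that respects the Lipschitz geometry, and a summation of the local estimates.

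\emph{Step 1: covering.} Since $D$ is a bounded Lipschitz domain, it satisfies an interior cone condition. Hence there exist $c_0>0$ and, for each $h\le h_0$, a family of subdomains $\{D_j\}$ with the following properties:
\begin{itemize}[label={}]
\item each $D_j$ is star-shaped with respect to a ball of radius comparable to $\delta\isdef c_0 h_{X,D}$ and has diameter $\mathcal{O}(\delta)$;
\item the $D_j$ cover $D$ with bounded overlap;
\item each $D_j$ contains enough points of $X$ to be $\Pcal_{k-1}$-unisolvent with a uniformly bounded Lebesgue-type constant.
\end{itemize}
The last property follows from norming-set arguments for polynomials on cones (Jetter--St\"ockler--Ward, Wendland's book). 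The threshold $h_0$ arises exactly here, as the smallness needed for these local norming sets to exist.

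\emph{Step 2: local estimate.} On each $D_j$, let $p_j\in\Pcal_{k-1}$ be the averaged Taylor polynomial of $f$. The Bramble--Hilbert/Sobolev embedding estimate, which is valid because $k>d/2$, gives
\[
|f-p_j|_{W^s_r(D_j)}\le C\,\delta^{k-s-d(1/2-1/r)}|f|_{B\!L_k(D_j)}
\]
and $\|f-p_j\|_{L_\infty(D_j)}\le C\delta^{k-d/2}|f|_{B\!L_k(D_j)}$. For the polynomial part, the norming-set property combined with a Markov/Bernstein inverse inequality on $D_j$ gives
\[
|p_j|_{W^s_r(D_j)}\le C\delta^{d/r-s}\max_{\xx_i\in D_j}|p_j(\xx_i)|\le C\delta^{d/r-s}\bigl(\|f\|_{\ell_\infty(X\cap D_j)}+\|f-p_j\|_{L_\infty(D_j)}\bigr).
\]
Combining the two bounds yields a local estimate with exponents $k-s-d(1/2-1/r)$ and $d/r-s$.

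\emph{Step 3: summation.} Raise the local estimates to the $r$-th power and sum over $j$, using bounded overlap. The passage $\ell_\infty\to\ell_p$ on $X\cap D_j$, the summation over $j$, and the conversion of $\sum_j|f|_{B\!L_k(D_j)}^r$ into a power of $|f|_{B\!L_k(D)}$ are handled by discrete H\"older inequalities. When $r<2$, the embedding $\ell_2\hookrightarrow\ell_r$ fails, so one works instead with $L_2$ on the bounded domain. This produces the positive part $(1/2-1/r)_+$ and the data exponent $d/\gamma-s$ with $\gamma=\max\{2,p,r\}$.

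The main obstacle I expect is the bookkeeping of the admissible smoothness $\ell$. One has to determine when the fractional Sobolev embedding $B\!L_k\hookrightarrow W^{s}_r$ holds with the borderline index $s=\ell_0$, which it does in the integer and $r=2$ cases, and when one must drop to $\lceil\ell_0\rceil-1$. Since this is exactly the content of \cite{Arcangli2011}, in the paper I would cite it and verify only that the Beppo-Levi semi-norm used there agrees with $|\cdot|_{B\!L_k(D)}$.
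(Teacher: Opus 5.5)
The paper gives no proof of this statement. It imports the result from \cite{Arcangli2011}, so your decision to cite that reference is exactly the paper's route. When you cite it, check one more thing: the inequality there is formulated for functions in the Sobolev space $W^k_2(D)$. Besides the equivalence of the semi-norms, you therefore also need $B\!L_k(D)=W^k_2(D)$ as sets, which holds on bounded Lipschitz domains by the Deny--Lions lemma.

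If your three-step sketch is meant as an actual proof, it has a genuine gap. It only covers integer $s$, whereas the statement allows real $0\le s\le\ell$ whenever $r<\infty$. For $s=\lfloor s\rfloor+\theta$ with $0<\theta<1$, the semi-norm $|f|_{W^s_r(D)}^r=\sum_{|\bs\alpha|=\lfloor s\rfloor}\int_D\int_D|\partial^{\bs\alpha}f(\xx)-\partial^{\bs\alpha}f(\bs z)|^r\,\|\xx-\bs z\|_2^{-d-\theta r}\,d\xx\,d\bs z$ is non-local. It is therefore not bounded by $\sum_j|f|_{W^s_r(D_j)}^r$: in your Step 3, pairs $(\xx,\bs z)$ lying in different pieces are simply lost, however good the local estimates are.

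You have to split the double integral:
\begin{itemize}
\item The part with $\|\xx-\bs z\|_2\ge\delta$ is bounded by $C\delta^{-\theta r}|f|_{W^{\lfloor s\rfloor}_r(D)}^r$. The integer-order inequality applies to this term, and the factor $\delta^{-\theta}$ turns its exponents into those for $s$.
\item The part with $\|\xx-\bs z\|_2<\delta$ is handled by your local estimates on enlarged pieces $\tilde D_j\supseteq(D_j+B_\delta(\bs 0))\cap D$. These pieces must still be star-shaped with respect to balls of comparable radius and have bounded overlap.
\end{itemize}
This non-locality, more than the borderline bookkeeping for $\ell$, is what makes the fractional case harder.

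The same issue affects your remark that for $r<2$ one can pass to $L_2$ on the bounded domain, i.e.\ use $|f|_{W^s_r(D)}\lesssim|f|_{W^s_2(D)}$. That bound holds only for integer $s$ and fails in general for fractional $s$. Instead, apply H\"older's inequality to the sum over the $\mathcal{O}(\delta^{-d})$ pieces. This produces the factor $\delta^{-d(1/r-1/2)}$ and hence the exponent $k-s$ directly.
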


We are now in the position to state the error estimate.

\begin{theorem}\label{thrm:Error}
With the notation and assumptions of Lemma \ref{lem:PropertiesRLSApproximation}
and Theorem \ref{thrm:SamplingInequality}, let $\mu$ be given by 
\eqref{eq:mean-as-interpolant} for $ \bs y = \bs z + \bs \varepsilon$.
Then there exist constants $ C, h_0 > 0 $ such that for all $ X $ with 
$ h_{X, D} < h_0 $, all $ 1 \leq r \leq \infty $, $ \gamma = \max\{ 2,r \}$ 
and $ 0 \leq s \leq \ell $, where $ \ell $ is as in 
Theorem \ref{thrm:SamplingInequality}, the estimate 
\begin{align*}
  \mathbb{E}\big[| Z - \mu |_{W^s_r( D)} \big] &
\leq C \bigg(h_{X, D}^{k-s - d\left(\frac{1}{2} - \frac{1}{r} \right)_+} 
+ \nu h_{X, D}^{\frac{d}{\gamma} - s} \bigg) |Z|_{B\!L_k( D)}\\
&+ C \bigg( \frac{1}{{\nu}}
h_{X, D}^{k-s - d\left(\frac{1}{2} - \frac{1}{r} \right)_+} 
+ h_{X, D}^{\frac{d}{\gamma} - s} \bigg) 
\mathbb{E} \big[\|\bs \varepsilon\|_2 \big]
\end{align*}
holds.
\end{theorem}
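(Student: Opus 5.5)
We apply the sampling inequality of Theorem~\ref{thrm:SamplingInequality} to the error function $f \isdef Z - \mu$, which lies in $B\!L_k(D)$ because $Z\in\Ncal_{\Kcal}(D)$ and $\mu=\Qcal_{X,\nu}(\bs y)$ also lies in the native space. The native space coincides with $B\!L_k(D)$ and carries an equivalent semi-norm. We take $p=2$, so that $\gamma=\max\{2,2,r\}=\max\{2,r\}$, as in the statement. This yields, pointwise in the noise realization,
\begin{align*}
|Z-\mu|_{W^s_r(D)} \leq C\Big( h_{X,D}^{k-s-d(1/2-1/r)_+}\,|Z-\mu|_{B\!L_k(D)} + h_{X,D}^{d/\gamma-s}\,\|Z-\mu\|_{\ell_2(X)}\Big).
\end{align*}
It then remains to bound the two terms on the right deterministically by means of Lemma~\ref{lem:PropertiesRLSApproximation}, and finally to take expectations.

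For the first term we use the triangle inequality, the equivalence of the semi-norms $|\cdot|_{B\!L_k(D)}\sim|\cdot|_{\Ncal_{\Kcal}(D)}$, and the stability estimate \eqref{eq:Qstability}. Together with $\sqrt{a+b}\le\sqrt a+\sqrt b$ this gives
\[
|Z-\mu|_{B\!L_k(D)}\le C\big(2|Z|_{\Ncal_{\Kcal}(D)}+\nu^{-1}\|\bs\varepsilon\|_2\big).
\]
This produces the terms $h^{k-s-\dots}|Z|_{B\!L_k(D)}$ and $\nu^{-1}h^{k-s-\dots}\|\bs\varepsilon\|_2$.

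For the discrete term we write $Z|_X=\bs z=\bs y-\bs\varepsilon$ and split
\[
\|Z-\mu\|_{\ell_2(X)}\le\|\bs y-\mu\|_{\ell_2(X)}+\|\bs\varepsilon\|_2 .
\]
The first part of Lemma~\ref{lem:PropertiesRLSApproximation} gives $\|\bs y-\mu\|_{\ell_2(X)}^2\le\|\bs\varepsilon\|_2^2+\nu^2|Z|^2_{\Ncal_{\Kcal}(D)}$, and hence $\|Z-\mu\|_{\ell_2(X)}\le 2\|\bs\varepsilon\|_2+\nu|Z|_{\Ncal_{\Kcal}(D)}$. Multiplied by $h^{d/\gamma-s}$, this gives the remaining terms $\nu h^{d/\gamma-s}|Z|_{B\!L_k(D)}$ and $h^{d/\gamma-s}\|\bs\varepsilon\|_2$, after again converting native-space semi-norms to $B\!L_k$ semi-norms.

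Finally we take expectations over the noise. The bound is linear in $\|\bs\varepsilon\|_2$, and $Z$ (more precisely $|Z|_{B\!L_k(D)}$) is treated as a deterministic element of the native space, as in Lemma~\ref{lem:PropertiesRLSApproximation}. Monotonicity and linearity of expectation therefore give the claim, with all constants absorbed into $C$. The constants $h_0$ and $C$ are those of Theorem~\ref{thrm:SamplingInequality}, multiplied by the semi-norm equivalence constants.

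The main obstacle is justifying the application of the sampling inequality to $Z-\mu$, namely checking that $Z-\mu\in B\!L_k(D)$ with the semi-norm identification. The index bookkeeping also needs care: the condition $k>d/2$ has to hold, and the parameter $\ell$ from Theorem~\ref{thrm:SamplingInequality} with $p=2$ has to match the one in the statement. A subtle point is that the native space semi-norm annihilates $\Pcal_q$, while the sampling inequality controls the full $W^s_r$ semi-norm. The discrete $\ell_2(X)$ term is what takes care of the polynomial part, which is why the unisolvency assumption and the condition $h_{X,D}\le h_0$ are needed. We do not expect further difficulties beyond these checks.
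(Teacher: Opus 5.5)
Your argument is correct, but it is organized differently from the paper's proof.

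The paper first uses linearity of $\Qcal_{X,\nu}$ to write $\mu=\Qcal_{X,\nu}(\bs z)+\Qcal_{X,\nu}(\bs\varepsilon)$. It then splits $|Z-\mu|_{W^s_r(D)}\le|Z-\Qcal_{X,\nu}(\bs z)|_{W^s_r(D)}+|\Qcal_{X,\nu}(\bs\varepsilon)|_{W^s_r(D)}$ and applies the sampling inequality twice:
\begin{itemize}
\item to the noise-free error, where Lemma~\ref{lem:PropertiesRLSApproximation} with $\bs\varepsilon=\bs 0$ gives $|\Qcal_{X,\nu}(\bs z)|_{\Ncal_{\Kcal}(D)}\le|Z|_{\Ncal_{\Kcal}(D)}$ and $\|\bs z-\Qcal_{X,\nu}(\bs z)\|_{\ell_2(X)}\le\nu|Z|_{\Ncal_{\Kcal}(D)}$;
\item to the pure-noise fit, where the lemma is used with $Z=0$ and gives $|\Qcal_{X,\nu}(\bs\varepsilon)|_{\Ncal_{\Kcal}(D)}\le\nu^{-1}\|\bs\varepsilon\|_2$ and $\|\Qcal_{X,\nu}(\bs\varepsilon)\|_{\ell_2(X)}\le 2\|\bs\varepsilon\|_2$.
\end{itemize}

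You instead apply the sampling inequality once, to the total error $Z-\mu$. You invoke the lemma once, with the actual noisy data, and separate the signal and noise contributions afterwards via $\sqrt{a+b}\le\sqrt a+\sqrt b$ and the triangle inequality $\|\bs z-\mu\|_{\ell_2(X)}\le\|\bs y-\mu\|_{\ell_2(X)}+\|\bs\varepsilon\|_2$.

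What each route buys:
\begin{itemize}
\item Your route needs neither the linearity of $\Qcal_{X,\nu}$ nor the reinterpretation of the lemma with $Z=0$ and data $\bs\varepsilon$. That reinterpretation is harmless, but your version uses the lemma exactly as stated.
\item The paper's route gives a clean bias/noise decomposition. The first line of the estimate comes entirely from the noise-free approximation error and the second line entirely from the propagation of the noise. This is the reading used when interpreting the noise floor in the numerical validation.
\end{itemize}
In your version the $|Z|_{B\!L_k(D)}$ and $\|\bs\varepsilon\|_2$ contributions arise from both the $B\!L_k$ term and the discrete term. After taking expectations the two bounds coincide up to constants.

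The ``obstacles'' you list at the end are not genuine difficulties here. Membership in $B\!L_k(D)$, $k>d/2$ and the choice $p=2$ are all covered by the stated hypotheses. The polynomial part is handled inside the sampling inequality itself, while unisolvency is what makes $\Qcal_{X,\nu}$ well defined.
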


\begin{proof}
The posterior expectation $ \mu $ satisfies 
$ \mu = \Qcal_{X,\nu}(\bs y) = \Qcal_{X,\nu}(\bs z) 
+ \Qcal_{X,\nu}(\bs \varepsilon)$ by linearity. This means we have 
a splitting
\begin{align}\label{eq:ErrorProofSplitting}
  | Z - \mu |_{W^s_r( D)} \leq | Z - \Qcal_{X,\nu}(\bs z) |_{W^s_r( D)} 
  + | \Qcal_{X,\nu}(\bs \varepsilon) |_{W^s_r( D)}.
\end{align}
The first term on the right-hand side can be bounded by  using Theorem
\ref{thrm:SamplingInequality} and in combination with Lemma 
\ref{lem:PropertiesRLSApproximation}. We obtain 
\begin{align}\label{eq:ErrorProof1}
| Z - \Qcal_{X,\nu}(\bs z)|_{W^s_r( D)} &
\leq C \left( h_{X, D}^{k-s - d (1/2 - 1/r)_+} 
| Z - \Qcal_{X,\nu}(Z)|_{B\!L_k( D)} + h_{X, D}^{d/\gamma - s} 
\| Z - \Qcal_{X,\nu}(\bs z)\|_{\ell_2(X)}\right) \nonumber \\
&\leq C h_{X, D}^{k-s - d (1/2 - 1/r)_+} | Z |_{B\!L_k( D)} 
+ C h_{X, D}^{d/\gamma - s}  \nu | Z |_{B\!L_k( D)}.
\end{align}
To finish the proof we have to control
$ | \Qcal_{X,\nu}(\bs \varepsilon) |_{W^s_r( D)} $. 
Again, with Lemma \ref{lem:PropertiesRLSApproximation}, we have 
\begin{align*}
| \Qcal_{X,\nu}(\bs \varepsilon) |_{B\!L_k( D)} \leq \frac{C}{\nu} 
\| \bs \varepsilon \|_2
\end{align*}
and 
\begin{align*}
\| \Qcal_{X,\nu}(\bs \varepsilon) \|_{\ell_2(X)}  
\leq \| \bs \varepsilon - \Qcal_{X,\nu}(\bs \varepsilon) \|_{\ell_2(X)} 
+ \| \bs \varepsilon \|_{2} \leq 2 \| \bs \varepsilon \|_{2}.
\end{align*}
Using the sampling inequality Theorem \ref{thrm:SamplingInequality}
for $ \Qcal_{X,\nu}(\bs \varepsilon) $ then yields
\begin{equation}\label{eq:ErrorProof2}
\begin{aligned}   
  |\Qcal_{X,\nu}(\bs \varepsilon) |_{W^s_r( D)} &
    \leq C h_{X, D}^{k - s - d(1/2 - 1/r)_+}
    |\Qcal_{X,\nu}(\bs \varepsilon) |_{B\!L_k( D)} 
    + Ch_{X, D}^{d/\gamma - s } \| \bs \varepsilon \|_{2} \\
    &\leq C \frac{1}{{\nu}} h_{X, D}^{k - s - d(1/2 - 1/r)_+}
    \|\bs \varepsilon\|_2 + Ch_{X, D}^{d/\gamma - s}  \| \bs \varepsilon \|_2.
\end{aligned}
\end{equation}
Inserting \eqref{eq:ErrorProof1} and \eqref{eq:ErrorProof2}
into \eqref{eq:ErrorProofSplitting} and taking the expected value concludes
the proof.
\end{proof}

In Subsection \ref{subsec:UniversalKriging} we assumed that the noise satisfied
$ \varepsilon \sim \Ncal(0,\nu^2) $. We can use this to refine the error 
estimate in Theorem \ref{thrm:Error}. To this end, we introduce the
\emph{separation radius} $ q_X $ of $ X $ as
\begin{align*}
q_X \isdef \frac{1}{2} \min_{i \neq j} \| \xx_i - \xx_j \|_2
\end{align*}
and the \emph{mesh ratio} $\rho_{X, D}$ of $ X $ in $  D $ as
\begin{align*}
    \rho_{X, D} \isdef \frac{h_{X, D}}{q_X}.
\end{align*}

\begin{corollary}
With the notation and assumptions of Theorem \ref{thrm:Error} assume that the
noise satisfies $ \varepsilon_i \sim \Ncal(0,\nu^2) $, $ i = 1, \dots, N $.
Then the estimate 
\begin{align*}
  \mathbb{E}\big[| Z - \mu |_{W^s_r( D)} \big] 
&\leq C \left(h_{X, D}^{k-s - d\left(\frac{1}{2} - \frac{1}{r} \right)_+} 
+ \nu h_{X, D}^{\frac{d}{\gamma} - s} \right) |Z|_{B\!L_k( D)}\\
&\qquad+ C \rho_{X, D}^{\frac{d}{2}} \left( h_{X, D}^{k-s - d\left(\frac{1}{2} 
- \frac{1}{r} \right)_+ - \frac{d}{2}} + \nu h_{X, D}^{\frac{d}{\gamma} 
- s - \frac{d}{2}} \right) 
\end{align*}
holds.
\end{corollary}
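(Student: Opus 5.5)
Starting from Theorem~\ref{thrm:Error}, the first line of the bound is already exactly as claimed. Only the factor $\mathbb{E}[\|\bs\varepsilon\|_2]$ in the second line needs to be estimated, and then expressed through the geometry of $X$.

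\emph{Step 1: the expectation of the noise norm.} By Jensen's inequality,
\[
\mathbb{E}[\|\bs\varepsilon\|_2]\leq \big(\mathbb{E}[\|\bs\varepsilon\|_2^2]\big)^{1/2}
=\Big(\sum_{i=1}^N\mathbb{E}[\varepsilon_i^2]\Big)^{1/2}=\sqrt{N}\,\nu .
\]
This holds for centered noise with variance $\nu^2$; independence is not needed.

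\emph{Step 2: bounding $N$ by the mesh ratio.} This is a standard packing argument. The balls $B(\xx_i,q_X)$ are pairwise disjoint. Since $D$ is a bounded Lipschitz domain, it satisfies an interior cone condition. Hence, for $q_X\le h_{X,D}\le h_0$, each ball $B(\xx_i,q_X)\cap D$ has volume at least $c\,q_X^d$, with $c$ depending only on $D$. Comparing volumes gives
\[
N\leq C\,\mathrm{vol}(D)\,q_X^{-d}=C\,\rho_{X,D}^{d}\,h_{X,D}^{-d},
\qquad\text{so}\qquad
\sqrt N\le C\rho_{X,D}^{d/2}h_{X,D}^{-d/2}.
\]
If the cone condition is inconvenient, one can instead enlarge $D$ to a fixed neighbourhood and use the full balls; the constant then changes only by a factor depending on $D$.

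\emph{Step 3: inserting into Theorem~\ref{thrm:Error}.} Substituting $\mathbb{E}[\|\bs\varepsilon\|_2]\le C\nu\rho_{X,D}^{d/2}h_{X,D}^{-d/2}$ into the second line gives
\[
C\Big(\tfrac1\nu h_{X,D}^{k-s-d(1/2-1/r)_+}+h_{X,D}^{d/\gamma-s}\Big)\nu\,\rho_{X,D}^{d/2}h_{X,D}^{-d/2}.
\]
The factor $1/\nu$ in the first summand cancels against $\nu$, which leaves
\[
C\rho_{X,D}^{d/2}\big(h_{X,D}^{k-s-d(1/2-1/r)_+-d/2}+\nu h_{X,D}^{d/\gamma-s-d/2}\big).
\]
This is exactly the second line of the corollary.

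The only step that is not pure bookkeeping is the cardinality bound in Step~2, and specifically the uniform lower volume bound for balls intersected with $D$. This is where the Lipschitz (cone) property of $D$ enters. Everything else is Jensen's inequality together with an algebraic substitution, with constants absorbed into the generic $C$.
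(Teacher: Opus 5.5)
Your proof is correct and follows the paper's argument essentially verbatim: Jensen's inequality gives $\mathbb{E}[\|\bs\varepsilon\|_2]\le\sqrt{N}\,\nu$, the bound $N\le C q_X^{-d}=C\rho_{X,D}^{d}h_{X,D}^{-d}$ rewrites this in terms of the point-set geometry, and substituting into Theorem~\ref{thrm:Error} cancels the factor $1/\nu$. The only addition on your side is the volume-comparison (packing) justification of the cardinality bound, which the paper simply asserts, and it is sound.
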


\begin{proof}
We have $ N \leq C q_X^{-d} = C h_{X, D}^{-d} \rho_{X, D}^d $ and 
\begin{align*}
  \mathbb{E}[\| \bs \varepsilon \|_2] \leq 
  \bigg( \mathbb{E}\bigg[ \sum_{i=1}^{N} | \varepsilon_i |^2\bigg]
  \bigg)^\frac{1}{2} =
  \sqrt{N} \nu \leq C h_{X, D}^{-\frac{d}{2}} \rho_{X, D}^{\frac{d}{2}} \nu.
\end{align*}
Inserting this into the bound of Theorem \ref{thrm:Error} completes the proof.
\end{proof}

\section{Conditionally Positive Definite Kernels in Samplet Coordinates}
\label{sec:SampletsCPD}
\subsection{Samplets}
Samplets are discrete signed measures, which exhibit vanishing moments. 
They can be constructed such that they are contained in \(L_{\Pcal} ( D)\), 
see \eqref{eq:FiniteDualSpace}, for a desired set of primitives \(\Pcal\).
We give a brief introduction on the underlying concepts and refer to
\cite{harbrecht2022samplets} for all the details. 
Given the set of data sites \(X\), we define the space
\begin{equation} \label{eq:spaceS}
\Xcal'\isdef\operatorname{span}\{\delta_{\bs x}:{\bs x}\in X\}
\subset[C(D)]'
\end{equation}
equipped with the inner
product \(\langle\delta_{{\bs x}_i},
\delta_{{\bs x}_j}\rangle_{\Xcal'}\isdef\delta_{ij}\)
for \({\bs x}_i,{\bs x}_j\in X\),
which is different from the native space one, 
compare \cite{balazs2024construction}.

We introduce a multiresolution analysis
\(
\Xcal_{0}'\subset\Xcal_{1}'\subset\cdots\subset\Xcal_{J}'=\Xcal',
\)
which keeps track of the increment of information between consecutive 
levels $j$ and $j+1$. 
Since $\Xcal_{j}'\subset \Xcal_{j+1}'$, we can
orthogonally decompose 
\(
\Xcal_{j+1}' = \Xcal_{j}'{\oplus}\Scal_{j}'
\)
for a certain \emph{detail space} $\Scal_{j}'\perp \Xcal_{j}'$. In analogy to wavelet
nomenclature, we call the elements of a basis of \(\Xcal_{0}'\) 
\emph{scaling distributions} and the elements of a basis of any of the spaces
$\Scal_{j}'$ \emph{samplets}. The collection of the bases of $\Scal_{j}'$ for
\(j=0,\ldots,J-1\)\/ together with a basis of \(\Xcal_{0}'\) is called a
\emph{samplet basis} for \(\Xcal'\). A samplet basis can be constructed such
that it exhibits \emph{vanishing moments} with respect to general primitives
\(\Pcal\), see again \cite{balazs2024construction}, i.e.,
\begin{equation}\label{eq:vanishingMoments}
\sigma_{j,k}(p)
 = 0\quad\text{for all}\ p\in\Pcal\text{ and any }\sigma_{j,k}\in \Scal_{j}'.
\end{equation}
A canonical choice, which we make in the following, is \(\Pcal=\Pcal_q\), 
that is, the space of polynomials up to total degree \(q\).

The samplet construction for \(\Xcal'\) is based on a 
\emph{hierarchical cluster tree} for
the set of data sites \(X\), i.e., a tree $\mathcal{T}$ with root \(X\) such
that each node \(\tau\in\Tcal\) is the disjoint union of its children.
A hierarchical cluster tree \(\Tcal\) for \(X\) amounts to
a support based clustering of the
Dirac-$\delta$-distributions spanning \(\Xcal'\).
With a slight abuse of notation, we will refer to this cluster tree also by
\(\Tcal\).
Given a balanced cluster tree \(\Tcal\), a samplet basis for \(\Xcal'\)
can be constructed with cost \(\Ocal(N)\). In case of a balanced
binary tree, the maximum level satisfies
\(J=\lceil\log_2(N)\rceil\) and the samplet basis exhibits the 
following properties,
see \cite{harbrecht2022samplets, avesani2025multiresolution} for details. 
\begin{theorem}\label{theo:waveletProperties}
The samplet basis \(\bigcup_{j=0}^{J}\{\sigma_{j,k}\}_k\)
forms an orthonormal basis in $\Xcal'$, satisfying
the following properties:
\begin{enumerate}[label=(\roman*)]
\item 
There holds $\operatorname{dim}\Xcal'_j\sim 2^{j}$.
\item 
The samplet basis exhibits \emph{vanishing moments} of order \(q+1\), i.e.,
\(
\sigma_{j,k}(p)
 = 0\ \text{for all}\ p\in\Pcal_q,\ j\geq 1.
\)
\item Let ${\bs x}_0 \in  D$, $f \in C^\alpha({\bs x}_0)$ for some $\alpha \geq 0$
with $q \geq \lfloor \alpha \rfloor$. Then, for every cluster 
$\tau_{j,k}\in\Tcal$ containing ${\bs x}_0$, there holds
\(
|\sigma_{j,k}(f)| \lesssim \operatorname{diam}(\tau_{j,k})^\alpha
\sqrt{\#\tau_{j,k}},
\)
where $C^\alpha(x_0)$ denotes the Jaffard regularity at ${\bs x}_0$. 
\end{enumerate}
\end{theorem}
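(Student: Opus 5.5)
The proof follows the recursive construction of the samplet basis over the balanced binary cluster tree $\Tcal$, as in \cite{harbrecht2022samplets}, and checks the three properties in turn.

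\emph{Construction and orthonormality.} On a leaf cluster $\tau$ we start from the Dirac distributions $\delta_{\bs x}$, ${\bs x}\in\tau$, which are orthonormal in $\Xcal'$. For an interior cluster $\tau$ with children $\tau',\tau''$, we collect the scaling distributions of the children into a family $\Phi_\tau$ that is orthonormal in $\Xcal'$. We then form the local moment matrix ${\bs M}_\tau\isdef[\varphi(p_\ell)]_{\varphi\in\Phi_\tau,\,\ell}$ for a basis $\{p_\ell\}$ of $\Pcal_q$, and compute a QR decomposition ${\bs M}_\tau^\intercal={\bs Q}{\bs R}$. Transforming $\Phi_\tau$ by ${\bs Q}$ gives a new orthonormal system. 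Because ${\bs R}$ is upper triangular, the first $\min\{n,\#\Phi_\tau\}$ rows of the moment matrix of the transformed system may be nonzero, while all remaining rows vanish. The first elements become the scaling distributions of $\tau$, and the remaining ones become the samplets of $\tau$.

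Since each step is an orthogonal change of basis within $\spn\Phi_\tau$, induction over the tree shows that all samplets together with the root scaling distributions form an orthonormal basis of $\Xcal'$. Because we are asked only for a basis of $\Xcal'$ (not a stable one), this suffices.

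\emph{Dimension count (i).} Each cluster keeps at most $n=\dim\Pcal_q$ scaling distributions. Hence $\dim\Xcal_j'$, which we identify with the span of the scaling distributions of the clusters on level $j$, is bounded by $n$ times the number of clusters on level $j$. For a balanced binary tree this number is $\sim 2^j$. For the lower bound, note that the clusters on level $j$ partition $X$, so each is nonempty and carries at least one scaling distribution. Together these give $\dim\Xcal'_j\sim 2^j$ with constants depending only on $n$.

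\emph{Vanishing moments (ii).} By construction the moment rows belonging to samplets are zero, so $\sigma_{j,k}(p)=0$ for all $p\in\Pcal_q$. This is exactly \eqref{eq:vanishingMoments} with $\Pcal=\Pcal_q$.

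\emph{Decay estimate (iii).} This is the step requiring the most care. The samplet $\sigma_{j,k}$ is supported on the Dirac distributions of the points in $\tau_{j,k}$, with coefficient vector ${\bs\omega}$ satisfying $\|{\bs\omega}\|_2=1$ by orthonormality. Since ${\bs x}_0\in\tau_{j,k}$, the Jaffard regularity $f\in C^\alpha({\bs x}_0)$ provides a polynomial $p$ of degree at most $\lfloor\alpha\rfloor\le q$ with $|f({\bs x})-p({\bs x})|\lesssim\|{\bs x}-{\bs x}_0\|_2^\alpha$ near ${\bs x}_0$. For ${\bs x}\in\tau_{j,k}$ this gives $|f({\bs x})-p({\bs x})|\lesssim\diam(\tau_{j,k})^\alpha$. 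Using (ii), then Cauchy--Schwarz,
\[
|\sigma_{j,k}(f)|=|\sigma_{j,k}(f-p)|\le\|{\bs\omega}\|_1\max_{{\bs x}\in\tau_{j,k}}|f({\bs x})-p({\bs x})|\le\sqrt{\#\tau_{j,k}}\,\|{\bs\omega}\|_2\,\diam(\tau_{j,k})^\alpha ,
\]
and $\|{\bs\omega}\|_2=1$ yields the claim.

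The only subtlety is that Jaffard regularity is a local statement, so the estimate for $f-p$ is uniform only in a neighbourhood of ${\bs x}_0$. For clusters of large diameter we would either use boundedness of $f$ on the bounded domain $D$ (so that $|f-p|\lesssim 1\lesssim\diam(\tau_{j,k})^\alpha$ with a constant depending on $D$), or simply absorb these finitely many coarse levels into the implicit constant.
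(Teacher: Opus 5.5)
Your proposal is correct and follows the standard argument of the works the paper cites for this result; the paper gives no proof of its own, only the pointer to \cite{harbrecht2022samplets, avesani2025multiresolution}. That argument obtains orthonormality via local QR factorizations of the moment matrices, (i) and (ii) by counting and by construction, and (iii) via vanishing moments, the local Jaffard polynomial and $\|{\bs\omega}\|_1\le\sqrt{\#\tau_{j,k}}\,\|{\bs\omega}\|_2$. One small slip: because you index ${\bs M}_\tau=[\varphi(p_\ell)]_{\varphi,\ell}$ with rows labelled by $\varphi\in\Phi_\tau$, you must QR-factor ${\bs M}_\tau$ itself rather than ${\bs M}_\tau^\intercal$ (otherwise ${\bs Q}$ is only $n\times n$ and cannot act on $\Phi_\tau$). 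With that fix, ${\bs Q}^\intercal{\bs M}_\tau={\bs R}$ gives exactly the row structure you describe.
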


The orthonormality of the samplet basis yields that the
samplet transform
\(
[\sigma_{j,k}]_{j,k}={\bs T}[\delta_{{\bs x}_i}]_{i=1}^{N}
\)
satisfies \({\bs T}^\intercal{\bs T}={\bs T}{\bs T}^\intercal
={\bs I}\in\Rbb^{N\times N}\). 
Moreover, the cost of the samplet transform
is of order \(\Ocal(N)\),
if it is computed recursively starting from the leaves
of the cluster tree \(\Tcal\).

Since \(\Xcal'\subseteq\Ncal_{\Kcal}'( D)\),
employing the Riesz isometry, the samplet basis induces a basis
for a subspace \(\Xcal\subseteq\Ncal_{\Kcal}( D)\).
Concretely, the samplet 
\(
\sigma_{j,k}=\sum_{i=1}^{N} \omega_{j,k,i}\delta_{{\bs x}_{i}}
\)
can be identified with the function
\(
\psi_{j,k}\isdef
\sum_{i=1}^{N} \omega_{j,k,i}\Kcal({\bs x}_i,\cdot).
\)
The vanishing moment property 
\eqref{eq:vanishingMoments} then translates to
\(
\langle\psi_{j,k},f\rangle_{\Kcal}=0
\)
for any \(f\in\Ncal_\Kcal(D)\) which satisfies 
\(f|_{\supp(\sigma_{j,k})}\in\Pcal\). 

Furthermore, there holds
\begin{equation} \label{eq:K_sigma}
\big[\big\langle\psi_{j,k},\psi_{j',k'}\big\rangle_{\Kcal}\big]_{j,j',k,k'}
= {\bs T}{\bs K}{\bs T}^\intercal\defis {\bs K}^{\Sigma},
\end{equation}
which means that the Gramian of the samplet basis coincides with the
samplet transformed kernel matrix.
For \emph{asymptotically smooth} kernels \(\Kcal\), i.e.,
there exist $C,r>0$ such that
\begin{equation}\label{HM_eg:kernel_estimate}
  \bigg|\frac{\partial^{|\bs\alpha|+|\bs\beta|}}
  	{\partial{\bs x}^{\bs\alpha}
  	\partial{\bs z}^{\bs\beta}} \Kcal({\bs x},{\bs z})\bigg|
  		\leq C \frac{(|\bs\alpha|+|\bs\beta|)!}
		{r^{|\bs\alpha|+|\bs\beta|}
		\|{\bs x}-{\bs z}\|_2^{|\bs\alpha|+|\bs\beta|}}
\end{equation}
uniformly in $\bs\alpha,\bs\beta\in\mathbb{N}^d$ for all 
\({\bs x},{\bs z}\in D\)
with \({\bs x}\neq{\bs z}\), 
the matrix \( {\bs K}^{\Sigma}\) becomes
quasi-sparse and can be efficiently computed.

We have the following statement from \cite{HM24}.

\begin{theorem}\label{thm:compression}
Let \(X\) be quasi-uniform and
set all coefficients of the matrix ${\bs K}^\Sigma$ from 
\eqref{eq:K_sigma} to zero which satisfy the admissibility condition

\begin{equation}\label{HM_eg:cutoff}
   \dist(\tau,\tau')\ge\rho\max\{\diam(\tau),\diam(\tau')\},\quad\rho>0,
\end{equation}
where \(\tau\) is the cluster supporting \(\sigma_{j,k}\) and \(\tau'\) is the 
cluster supporting \(\sigma_{j',k'}\), respectively. 
{Then, there exists
a constant \(C>0\), such that the resulting compressed matrix 
${\bs K}^{\Sigma,\rho}$ satisfies}
\begin{equation}\label{eq:CompressionError}
 {\big\|{\bs K}^\Sigma-{\bs K}^{\Sigma,\rho}\big\|_F}
   \leq C \bigg(\frac{r\rho}{d}\bigg)^{-2(q+1)}{\big\|{\bs K}^\Sigma\big\|_F}.
\end{equation}
The compressed matrix has $\mathcal{O}(N\log N)$ nonzero
coefficients.
\end{theorem}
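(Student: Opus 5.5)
The proof follows the standard samplet compression argument of \cite{HM24}: Taylor expansion combined with vanishing moments, then summation of the discarded entries. We bound each discarded entry first and then sum.

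\emph{Single entry.} Fix an admissible pair of clusters \(\tau,\tau'\) that support \(\sigma_{j,k}\) and \(\sigma_{j',k'}\), with \(j,j'\ge1\). Scaling distributions are never dropped, since they are supported on the root-level clusters. The entry is \(\sigma_{j,k}\otimes\sigma_{j',k'}(\Kcal)\). Expand \(\Kcal\) in \({\bs x}\) around the midpoint \({\bs x}_\tau\) of \(\tau\), and in \({\bs z}\) around the midpoint \({\bs x}_{\tau'}\) of \(\tau'\), up to total degree \(q\) in each variable. Because \(\sigma_{j,k}\) and \(\sigma_{j',k'}\) annihilate \(\Pcal_q\) (Theorem~\ref{theo:waveletProperties}(ii)), only the remainder survives. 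It involves derivatives \(\partial_{\bs x}^{\bs\alpha}\partial_{\bs z}^{\bs\beta}\Kcal\) with \(|\bs\alpha|=|\bs\beta|=q+1\), evaluated at intermediate points of \(\tau\times\tau'\). By admissibility \eqref{HM_eg:cutoff}, the distance between these points is at least \(\dist(\tau,\tau')\). The asymptotic smoothness bound \eqref{HM_eg:kernel_estimate} then gives
\[
|({\bs K}^\Sigma)_{(j,k),(j',k')}|\lesssim
\frac{(2q+2)!}{r^{2q+2}}\,\frac{\diam(\tau)^{q+1}\diam(\tau')^{q+1}}{\dist(\tau,\tau')^{2q+2}}\,\|\bs\omega_{j,k}\|_1\|\bs\omega_{j',k'}\|_1 .
\]
The multinomial count of the \(d\)-variate Taylor terms introduces the factor \(d^{2(q+1)}\), which is where the combination \((r\rho/d)\) comes from. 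Orthonormality gives \(\|\bs\omega\|_1\le\sqrt{\#\tau}\). Admissibility gives \(\diam(\tau)\diam(\tau')/\dist(\tau,\tau')^2\le\rho^{-2}\). Together these yield \(|{\rm entry}|\lesssim (r\rho/d)^{-2(q+1)}\sqrt{\#\tau\,\#\tau'}\,\cdot(\text{decay in }\dist)\).

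\emph{Summation.} The step I expect to be the main obstacle is summing the squares of these bounds over all admissible pairs and comparing the result with \(\|{\bs K}^\Sigma\|_F\). I would keep part of the decay factor \(\max\{\diam\tau,\diam\tau'\}^{2q+2}/\dist^{2q+2}\) to obtain geometric convergence. For each fixed pair of levels \((j,j')\), quasi-uniformity gives \(\#\tau\sim 2^{J-j}\) and \(\diam\tau\sim2^{-(J-j)/d}\) up to scaling. The sum over admissible cluster pairs at distance \(\sim t\) is then bounded by an integral \(\int |{\bs x}-{\bs z}|^{-(2q+2)}\)-type expression, comparable to a quadrature of \(\Kcal\)-derivatives. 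I would compare this with \(\|{\bs K}\|_F^2=\|{\bs K}^\Sigma\|_F^2\), which is bounded below by \(\sim N^2\) times an average of \(\Kcal^2\) over \(D\times D\). This lower bound holds because \(\Kcal\) is not identically zero, by quasi-uniformity, and because \({\bs T}\) is orthogonal. The double sum over levels converges since the level-dependent factors decay geometrically. Near-singular contributions never enter, since the pairs kept in \({\bs K}^{\Sigma,\rho}\) are exactly the non-admissible ones. This gives \eqref{eq:CompressionError} with a constant \(C\) depending on \(D\), \(d\), \(q\) and the quasi-uniformity constant.

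\emph{Sparsity.} To count the nonzeros, I would fix a cluster \(\tau\) on level \(j\) and a level \(j'\le j\). Quasi-uniformity implies that the number of clusters \(\tau'\) on level \(j'\) violating \eqref{HM_eg:cutoff} is bounded by a constant depending on \(\rho\) and \(d\), by a volume argument. The symmetric case \(j'>j\) contributes \(\mathcal{O}(2^{j'-j})\) clusters. Each pair of clusters carries \(\mathcal{O}(1)\) samplets on each side. Summing gives \(\sum_{j}\sum_{j'} \mathcal{O}(2^{\max\{j,j'\}})=\mathcal{O}(N J)=\mathcal{O}(N\log N)\).
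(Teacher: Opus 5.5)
The paper does not prove this theorem; it quotes it from \cite{HM24}. The argument there has the structure you propose: a Taylor/vanishing-moment bound for each dropped entry, a summation over admissible pairs compared against $\|{\bs K}^\Sigma\|_F\gtrsim N$ for quasi-uniform $X$, and a volume count of the non-admissible pairs. Your entry bound and your sparsity count are fine, with two small corrections. First, the Taylor intermediate points lie in the convex hulls of $\tau,\tau'$, so $\dist(\tau,\tau')$ must be measured between bounding boxes (or convex hulls), as in the samplet literature, not between the point sets. Second, $\diam\tau\sim 2^{-j/d}$, not $2^{-(J-j)/d}$.

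The genuine gap is the summation. You leave it as a plan, and it is exactly where the content of \eqref{eq:CompressionError} sits. Zeroing entries cannot increase the Frobenius norm, so $\|{\bs K}^\Sigma-{\bs K}^{\Sigma,\rho}\|_F\le\|{\bs K}^\Sigma\|_F$ holds trivially. The estimate therefore says something only if $C$ is independent of $\rho$ and $N$, and your plan does not secure this:

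\begin{itemize}
\item Spending the whole factor $(\diam\tau\,\diam\tau')^{q+1}/\dist(\tau,\tau')^{2(q+1)}$ on $\rho^{-2(q+1)}$ leaves no decay, and the sum over levels produces a factor growing with $\log N$.
\item ``Keeping part of the decay'' gives away part of the power of $\rho$.
\item Keeping all of it but summing the level series from the root fails too. A cluster on level $j$ has about $\rho^d$ admissible neighbours just beyond the threshold, which gives a contribution $\sim N^2(\rho\,2^{-j/d})^d$ and a total $\sim\rho^d$. That is a loss of $\rho^{d/2}$ in the norm.
\end{itemize}

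What closes the argument is the exact identity, for $\diam\tau\ge\diam\tau'$,
\[
\frac{(\diam\tau\,\diam\tau')^{q+1}}{\dist(\tau,\tau')^{2(q+1)}}
=\rho^{-2(q+1)}\bigg(\frac{\diam\tau'}{\diam\tau}\bigg)^{q+1}
\bigg(\frac{\rho\,\diam\tau}{\dist(\tau,\tau')}\bigg)^{2(q+1)},
\]
where the last bracket is at most $1$ by admissibility. This keeps the full power $\rho^{-2(q+1)}$ and still supplies decay in both the level difference and the distance.

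By quasi-uniformity, the sum of $\#\tau'\,(\rho\diam\tau/\dist(\tau,\tau'))^{4(q+1)}$ over admissible $\tau'$ on level $j'\ge j$ is $\lesssim N(\rho\diam\tau)^d$ (for $4(q+1)>d$; the other case is similar). Hence the level pair $(j,j')$ contributes
\[
\lesssim(r\rho/d)^{-4(q+1)}N^2(\rho\,2^{-j/d})^d\,2^{-2(q+1)(j'-j)/d}
\]
to the squared error. The point you are missing is that admissible pairs exist only on levels with $\rho\diam\tau\le\diam(D)$. So after summing over $j'$, the remaining sum $\sum_j(\rho\,2^{-j/d})^d$ runs only over those levels and is $\Ocal(1)$ uniformly in $\rho$. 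This gives $\|{\bs K}^\Sigma-{\bs K}^{\Sigma,\rho}\|_F\lesssim(r\rho/d)^{-2(q+1)}N$, and your lower bound $\|{\bs K}^\Sigma\|_F\gtrsim N$ finishes the proof.
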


For the sake of completeness, we show that the kernels of 
Example~\ref{ex:CPDKernels} satisfy \eqref{HM_eg:kernel_estimate}. To this end, 
we need an estimate on derivatives of radial functions. The key ingredient will
be a specific version of Faa di Bruno's formula for radial functions and the 
proof that leads to it, see, e.g., \cite[Proposition 3.5]{Franz2018}.

\begin{theorem}\label{thrm:FaaDiBruno}
Let $ f \in C^s([0,\infty))$, $ s \in \N $ and let $ \aalpha \in \N^d_0 $ with 
$ 1 < |\aalpha| \leq s $. Then, there are certain coefficients 
$ c_{\bs{\beta}} \in \R $, $ \bs{\beta} \in \N^d_0 $, such that 
\begin{align*}
\partial^{\aalpha}f(\|\xx\|_2) = \sum_{m=1}^{|\aalpha|} f^{(m)}(\|\xx\|_2) 
\sum_{\ell =1}^{2|\aalpha|-1} \|\xx\|_2^{-\ell} \sum_{|\bs{\beta}| 
= \ell + m - |\aalpha|} c_{\bs{\beta}} \xx^{\bs{\beta}}, 
\quad \xx \in\R^d \setminus\{ \bs{0} \}.
\end{align*}
\end{theorem}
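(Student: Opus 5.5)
The plan is to argue by induction on $|\aalpha|$, writing $r\isdef\|\xx\|_2$ and using only the elementary identity $\partial_{x_i} r = x_i/r$ together with the chain and product rules. It is convenient to prove a slightly more general claim that also covers $|\aalpha|=1$, and then specialize. For $|\aalpha|=1$, say $\aalpha=\bs e_i$, we have $\partial_i f(r)=f'(r)\,x_i r^{-1}$. This is already of the asserted shape: take $m=1$, $\ell=1$, and $|\bs\beta|=\ell+m-|\aalpha|=1$ with $c_{\bs e_i}=1$. So the base case is immediate, and the content of the theorem lies in checking that the admissible ranges of $m$, $\ell$ and $|\bs\beta|$ propagate correctly under one further differentiation.

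For the inductive step, suppose the formula holds for some $\aalpha$ with $|\aalpha|=a<s$, and differentiate a generic term $T=f^{(m)}(r)\,r^{-\ell}\,\xx^{\bs\beta}$ with $|\bs\beta|=\ell+m-a$ in the direction $x_i$. The product rule produces three kinds of terms, and I will track the triple $(m,\ell,|\bs\beta|)$ for each.
\begin{enumerate}[label=(\alph*)]
\item Differentiating $f^{(m)}(r)$ gives $f^{(m+1)}(r)\,r^{-\ell-1}\,x_i\xx^{\bs\beta}$. The new triple is $(m+1,\ell+1,|\bs\beta|+1)$, and indeed $|\bs\beta|+1=(\ell+1)+(m+1)-(a+1)$.
\item Differentiating $r^{-\ell}$ gives $-\ell f^{(m)}(r)\,r^{-\ell-2}\,x_i\xx^{\bs\beta}$. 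The new triple is $(m,\ell+2,|\bs\beta|+1)$, and $|\bs\beta|+1=(\ell+2)+m-(a+1)$.
\item Differentiating $\xx^{\bs\beta}$ gives $\beta_i f^{(m)}(r)\,r^{-\ell}\,\xx^{\bs\beta-\bs e_i}$. The new triple is $(m,\ell,|\bs\beta|-1)$, and $|\bs\beta|-1=\ell+m-(a+1)$.
\end{enumerate}
In every case the constraint $|\bs\beta|=\ell+m-|\aalpha|$ is preserved with $|\aalpha|$ replaced by $a+1$. The index ranges also propagate: $m$ grows by at most one, so $1\le m\le a+1$, and $\ell$ grows by at most two, so starting from $\ell\le 2a-1$ we get $\ell\le 2a+1=2(a+1)-1$, while $\ell\ge1$ is never lost. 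Terms whose $\bs\beta$ would acquire a negative entry in case (c) carry the factor $\beta_i=0$ and simply vanish. Collecting all terms yields new coefficients $c_{\bs\beta}$, which may depend on $\aalpha$, $m$ and $\ell$ but not on $f$ or $\xx$. This is exactly the asserted representation for $|\aalpha|=a+1$, and the statement for $1<|\aalpha|\le s$ then follows from the generalized claim.

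Smoothness causes no difficulty. On $\R^d\setminus\{\bs 0\}$ the map $\xx\mapsto r$ is $C^\infty$, and $f\in C^s$ guarantees that each derivative $f^{(m)}$ with $m\le|\aalpha|\le s$ exists and is continuous. The only genuine point of care is the bookkeeping of the index ranges, in particular checking that the upper bound $2|\aalpha|-1$ on $\ell$ and the lower bound $m\ge1$ survive each differentiation step. These are the details I would verify most carefully. No earlier result of the paper is needed beyond this computation; it is the argument of \cite[Proposition 3.5]{Franz2018}.
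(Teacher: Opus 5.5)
Your induction is correct and follows the paper's own route: the paper defers to the proof of \cite[Proposition 3.5]{Franz2018}, and your three product-rule terms (a)--(c) are exactly the three sums it displays for $\partial^{\aalpha+\bs e_j}f(\|\xx\|_2)$ in the proof of Corollary~\ref{cor:BoundOnDerivative}. You are also right to let the coefficients depend on $m$ and $\ell$, not only on $\bs\beta$; read literally with $c_{\bs\beta}$ depending on $\bs\beta$ alone, the identity already fails for $d=2$, $\aalpha=(2,0)$ (test with $f(r)=r$ and $f(r)=r^2$), so your reading is the one under which the statement holds.
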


\begin{remark}
Taking a close look at the proof of \cite[Proposition 3.5]{Franz2018}, a technical induction yields the bound 
\begin{align}
\sum_{\ell =1}^{2|\aalpha|-1}  \sum_{|\bs{\beta}| = \ell + m - |\aalpha|}
|c_{\bs{\beta}}| \leq 4^{|\aalpha|} |\aalpha|!, \quad \text{for all } 1 \leq m \leq | \aalpha |.
\end{align}
\end{remark}

We can now prove the main estimate used to show that the kernels of interest are indeed asymptotically smooth.

\begin{corollary}\label{cor:BoundOnDerivative}
In the context of Theorem \ref{thrm:FaaDiBruno}, there holds 
\begin{align*}
|\partial^{\aalpha} f(\| \xx \|_2) | \leq 4^{|\aalpha|} 
| \aalpha|! \sum_{m=1}^{|\aalpha|}\big| f^{(m)}(\| \xx \|_2)\big| 
\| \xx \|_2^{- |\aalpha| + m}, \quad \xx \neq \bs{0}.
\end{align*}
\end{corollary}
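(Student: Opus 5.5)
The bound follows directly from the representation in Theorem~\ref{thrm:FaaDiBruno} combined with the coefficient estimate of the preceding remark. Fix $\xx\neq\bs 0$ and write $r\isdef\|\xx\|_2>0$. Taking absolute values in the Faa di Bruno formula and applying the triangle inequality gives
\[
|\partial^{\aalpha} f(r)| \leq \sum_{m=1}^{|\aalpha|} |f^{(m)}(r)| \sum_{\ell=1}^{2|\aalpha|-1} r^{-\ell}\sum_{|\bs\beta|=\ell+m-|\aalpha|} |c_{\bs\beta}|\,|\xx^{\bs\beta}|.
\]

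The key pointwise step is the monomial bound $|\xx^{\bs\beta}|\le r^{|\bs\beta|}$. It holds because $|x_i|\le r$ for every component $i$, so
\[
|\xx^{\bs\beta}|=\prod_i |x_i|^{\beta_i}\le r^{|\bs\beta|}.
\]
Since $|\bs\beta|=\ell+m-|\aalpha|$, every term in the inner double sum satisfies
\[
r^{-\ell}|\xx^{\bs\beta}|\le r^{-\ell+\ell+m-|\aalpha|}=r^{m-|\aalpha|}.
\]
The power of $r$ therefore no longer depends on $\ell$ or $\bs\beta$, and factors out of the inner sums. Only multi-indices with $|\bs\beta|\ge 0$ occur, since those with $\ell+m-|\aalpha|<0$ give empty sums, so no issue arises there.

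What remains for each fixed $m$ is the factor $\sum_{\ell}\sum_{|\bs\beta|=\ell+m-|\aalpha|}|c_{\bs\beta}|$. The remark bounds this by $4^{|\aalpha|}|\aalpha|!$ uniformly in $1\le m\le|\aalpha|$. Pulling this constant out of the $m$-sum yields exactly the claimed estimate.

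There is no real obstacle here: the whole argument is bookkeeping. The only point needing care is to check that the exponents combine to $m-|\aalpha|$ independently of $\ell$. It is also worth noting that the coefficients $c_{\bs\beta}$ may depend on $m$ and $\ell$ in the original formula; the remark's bound is stated per $m$, which is precisely the form needed.
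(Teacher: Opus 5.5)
Your argument is correct, and it reaches the bound more directly than the paper does. The paper uses induction on $|\aalpha|$. It checks $|\aalpha|=1$ by hand. In the inductive step it expands $\partial^{\aalpha+\bs{e}_j}f(\|\xx\|_2)$ through the differentiation recursion underlying the Faa di Bruno formula, which has three terms according to whether the new derivative hits $f^{(m)}$, $\|\xx\|_2^{-\ell}$ or $\xx^{\bs\beta}$. It then uses the same monomial bound $|\xx^{\bs\beta}|\le\|\xx\|_2^{|\bs\beta|}$ as you do. The weighted inner sums are controlled via $\ell\le 2|\aalpha|-1$ and $\beta_j\le|\bs\beta|\le 2|\aalpha|-1$, together with $\sum|c_{\bs\beta}|\le 4^{|\aalpha|}|\aalpha|!$. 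After an index shift this gives the constant $(4|\aalpha|-1)4^{|\aalpha|}|\aalpha|!\le 4^{|\aalpha|+1}(|\aalpha|+1)!$.

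The coefficient bound used in that step is exactly the remark at level $|\aalpha|$, although the paper calls it the induction hypothesis. So both proofs rest on the same input. You apply the remark at level $|\aalpha|$ to the representation at level $|\aalpha|$, and need neither the recursion nor the induction.

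What the paper's route adds is a view of how the constant $4^n n!$ propagates from $n$ to $n+1$, which is essentially the inductive step behind the remark. It also adds the base case $|\aalpha|=1$, which your argument does not reach, since the theorem is stated only for $|\aalpha|>1$. That case is trivial, because $|\partial_{x_j} f(\|\xx\|_2)|=|f'(\|\xx\|_2)|\,|x_j|/\|\xx\|_2\le|f'(\|\xx\|_2)|$. It is still worth a line, since the subsequent asymptotic-smoothness corollary applies the estimate to all multi-indices.
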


\begin{proof}
We prove the claim by induction over $ n \isdef |\aalpha| $. For $ n = 1 $, 
we have $ \aalpha = \bs{e}_j $, $ 1 \leq j \leq d $, and there holds
\begin{align*}
\frac{\partial}{\partial x_j} f(\| \xx \|_2) = f'(\| \xx \|_2) 
\frac{x_j}{\| \xx \|_2}.
\end{align*} 
This implies
\begin{align*}
\bigg| \frac{\partial}{\partial x_j} f(\| \xx \|_2) \bigg| 
\leq| f'(\| \xx \|_2) |\leq 4 | f'(\| \xx \|_2) |,
\end{align*}
which is the statement. For the induction step, let the induction hypothesis
be valid for $ |\bs\alpha|=n $ with $ n \geq 1$. 
We can write each multi-index of modulus 
$ n+1 $ as $ \aalpha + \bs{e}_j $ with a multi-index $ \aalpha $ of modulus 
$ n $ and $ 1 \leq j \leq d $. There holds the following representation of 
$ \partial^{\aalpha + \bs{e}_j}f(\| \xx \|_2)$, 
see \cite[Proof of Proposition 3.5]{Franz2018},
\begin{align*}
\partial^{\aalpha + \bs{e}_j}f(\| \xx \|_2) &= \sum_{m=1}^{|\aalpha|} 
\frac{x_j}{\|\xx\|_2} f^{(m+1)}(\|\xx\|_2) \sum_{\ell =1}^{2|\aalpha|-1} 
\|\xx\|_2^{-\ell} \sum_{|\bs{\beta}| = \ell + m - 
|\aalpha|} c_{\bs{\beta}} \xx^{\bs{\beta}} \\
&\phantom{=}- \sum_{m=1}^{|\aalpha|} f^{(m)}(\|\xx\|_2) 
\sum_{\ell =1}^{2|\aalpha|-1} 
\ell \|\xx\|_2^{-\ell-2 } \sum_{|\bs{\beta}| = \ell + m - |\aalpha|} 
c_{\bs{\beta}} \xx^{\bs{\beta} + \bs{e}_j} \\
&\phantom{=}+ \sum_{m=1}^{|\aalpha|} f^{(m)}(\|\xx\|_2) 
\sum_{\ell =1}^{2|\aalpha|-1} \|\xx\|_2^{-\ell} \sum_{|\bs{\beta}| 
= \ell + m - |\aalpha|} 
c_{\bs{\beta}} \beta_j \xx^{\bs{\beta} - \bs{e}_j}.
\end{align*} 
Taking the absolute value on both sides and exploiting the upper bound
$ | \xx^{\bs{\beta}} | \leq \|\xx\|_2^{|\bs{\beta}|}$, yields 
\begin{equation}\label{eq:ProofBoundDerivative1}
\begin{aligned}\big| \partial^{\aalpha + \bs{e}_j}f(\| \xx \|_2) \big| & 
\leq \sum_{m=1}^{|\aalpha|} \big| f^{(m+1)} (\| \xx \|_2) \big| 
\| \xx \|_2^{-|\aalpha| + m} \sum_{\ell =1}^{2|\aalpha|-1} \sum_{|\bs{\beta}| 
= \ell + m - |\aalpha|} | c_{\bs{\beta}} |\\
&\phantom{=} + \sum_{m=1}^{|\aalpha|} \big| f^{(m)} (\| \xx \|_2) \big| 
\| \xx \|_2^{-|\aalpha| + m-1} \sum_{\ell=1}^{2|\aalpha|-1} \sum_{|\bs{\beta}| 
= \ell + m - |\aalpha|} \ell | c_{\bs{\beta}} |\\
&\phantom{=} + \sum_{m=1}^{|\aalpha|} \big| f^{(m)} (\| \xx \|_2) \big| 
\| \xx \|_2^{-|\aalpha| + m-1} \sum_{\ell =1}^{2|\aalpha|-1} \sum_{|\bs{\beta}| 
= \ell + m - |\aalpha|}  \beta_j  |c_{\bs{\beta}} |.
\end{aligned}
\end{equation}

Next, we bound the innermost double sums in the three terms on the right-hand
side. In the first term, we directly use the induction hypothesis while the
second term is estimated by
\begin{align*}
\sum_{\ell =1}^{2|\aalpha|-1} \sum_{|\bs{\beta}| 
= \ell + m - |\aalpha|} \ell | c_{\bs{\beta}} | 
\leq (2 | \aalpha | - 1) \sum_{\ell =1}^{2|\aalpha|-1} \sum_{|\bs{\beta}| 
= \ell + m - |\aalpha|} | c_{\bs{\beta}} | 
\leq (2 | \aalpha | - 1) 4^{| \aalpha |} | \aalpha |!.
\end{align*}
Finally, for the third term, we have 
\begin{align*}
\sum_{\ell =1}^{2|\aalpha|-1} \sum_{|\bs{\beta}| = \ell + m - |\aalpha|} 
 \beta_j |c_{\bs{\beta}} | &\leq \sum_{\ell =1}^{2|\aalpha|-1}
 (\ell + m - | \aalpha |)
 \sum_{|\bs{\beta}| = \ell + m - |\aalpha|} |c_{\bs{\beta}} |
 \leq (2 | \aalpha | - 1) 4^{| \aalpha |} | \aalpha |! 
\end{align*}
where we used that $1 \leq m \leq | \aalpha | $.

Inserting these bounds into \eqref{eq:ProofBoundDerivative1} yields
\begin{align*}
\big|\partial^{\aalpha + \bs{e}_j}f(\| \xx \|_2) \big| & 
\leq \sum_{m=1}^{|\aalpha|} \left| f^{(m+1)} (\| \xx \|_2) \right|
\| \xx \|_2^{-|\aalpha| + m} 4^{|\aalpha|}|\aalpha|! \\
&\phantom{=} + \sum_{m=1}^{|\aalpha|} \left| f^{(m)} (\| \xx \|_2) \right|
\| \xx \|_2^{-|\aalpha| + m -1}(2 | \aalpha | - 1) 4^{| \aalpha |} | \aalpha |!\\
&\phantom{=} + \sum_{m=1}^{|\aalpha|} \left| f^{(m)} (\| \xx \|_2) \right|
\| \xx \|_2^{-|\aalpha| + m-1} (2 | \aalpha | - 1) 4^{| \aalpha |} | \aalpha |!.
\end{align*}
From this, we infer by shifting the index in the first sum
and appropriately modifying the last two summands that
\begin{align*}
\big|\partial^{\aalpha + \bs{e}_j}f(\| \xx \|_2) \big| & 
\leq \sum_{m=1}^{|\aalpha|+1} \big| f^{(m)} (\| \xx \|_2) \big|
\| \xx \|_2^{-|\aalpha| + m-1} (4|\bs\alpha|-1)4^{|\aalpha|}|\aalpha|! \\
&\leq 4^{|\bs\alpha|+1}(|\bs\alpha|+1)!\sum_{m=1}^{|\aalpha|+1} \big| 
f^{(m)} (\| \xx \|_2) \big|
\| \xx \|_2^{-(|\aalpha|+1) + m}, 
\end{align*}
as claimed.
\end{proof}

This helps us to prove that the kernels introduces in Example~\ref{ex:CPDKernels} 
satisfy \eqref{HM_eg:kernel_estimate}.

\begin{corollary}
The polyharmonic splines and the multiquadrics are asymptotically smooth.
\end{corollary}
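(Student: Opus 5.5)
Both kernels are radial, $\Kcal(\xx,\bs z)=f(\|\xx-\bs z\|_2)$. Since $\partial_{\bs z}^{\bs\beta}$ of a function of $\xx-\bs z$ equals $(-1)^{|\bs\beta|}\partial^{\bs\alpha+\bs\beta}$ evaluated at $\xx-\bs z$, it suffices to show
\[
|\partial^{\bs\gamma} f(\|\xx\|_2)|\leq C\,\frac{|\bs\gamma|!}{r^{|\bs\gamma|}\|\xx\|_2^{|\bs\gamma|}}
\]
for all $\bs\gamma\in\Nbb_0^d$ and all $\xx\neq\bs 0$ with $\|\xx\|_2\leq\diam(D)\defis R$. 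The case $\bs\gamma=\bs 0$ is trivial because $f$ is bounded on $[0,R]$. For $|\bs\gamma|\geq1$ we apply Corollary~\ref{cor:BoundOnDerivative}, which reduces the task to a one-dimensional estimate: find constants $C_0,a>0$ with
\[
|f^{(m)}(t)|\leq C_0\, a^m m!\, t^{-m}\quad\text{for } 0<t\leq R,\ m\geq1.
\]
Given this, each summand in the corollary is bounded by $C_0 a^m m!\,t^{-|\bs\gamma|}\leq C_0\max(a,1)^{|\bs\gamma|}|\bs\gamma|!\,t^{-|\bs\gamma|}$. There are $|\bs\gamma|\leq 2^{|\bs\gamma|}$ summands. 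Together with the prefactor $4^{|\bs\gamma|}|\bs\gamma|!$ this gives a bound $C_0(8\max(a,1))^{|\bs\gamma|}(|\bs\gamma|!)^2 t^{-|\bs\gamma|}$.

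This exposes the main obstacle: a squared factorial, while asymptotic smoothness requires a single $|\bs\gamma|!$. I would try two remedies in order.

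First, verify the one-dimensional derivatives are better than $m!$. For $f(t)=t^\ell$ with $\ell$ odd, $f^{(m)}(t)=\ell(\ell-1)\cdots(\ell-m+1)t^{\ell-m}$. This vanishes for $m>\ell$, so only finitely many $m$ contribute, each bounded by $\ell!\,R^\ell t^{-m}$, i.e.\ without any $m!$ growth.

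For $f(t)=t^\ell\log t$ with $\ell$ even, Leibniz's rule shows that for $m>\ell$ we have $f^{(m)}(t)=c_{m}t^{\ell-m}$ with $|c_m|\leq \ell!\,(m-\ell-1)!\,2^m$. This is because only terms with $\log$ differentiated at least $m-\ell$ times survive, besides the polynomial part, which is bounded by $(\log R+C)$. Hence $|f^{(m)}(t)|\leq C\,2^m (m-1)!\,t^{-m}$ for $t\leq R$, up to a constant depending on $\ell,R$; near $t=0$ the $\log t$ factor in the terms with $m\leq\ell$ is absorbed by $t^{\ell-m}|\log t|\lesssim t^{-m}$.

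For multiquadrics, $g(s)=(c^2+s)^\beta$ with $s=t^2$ is analytic in $t$ with complex singularities at $t=\pm ic$. Cauchy's estimate on a disc of radius $\min(c,t)/2$ gives $|f^{(m)}(t)|\leq C\,m!\,(2/\min(c,t))^m\leq C' m!\,(2\max(1,R/c)/t)^m$.

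Second, to remove the extra factorial that Corollary~\ref{cor:BoundOnDerivative} introduces when these bounds are combined, I would not use its crude prefactor $4^{|\bs\gamma|}|\bs\gamma|!$ directly. Instead I would pair the $m!$ from $f^{(m)}$ against the combinatorial coefficient. Concretely, I would track the induction in Corollary~\ref{cor:BoundOnDerivative} with the sharper weighted quantity $\sum_{\ell,\bs\beta}|c_{\bs\beta}|\leq 4^{|\bs\gamma|}|\bs\gamma|!/m!$, which I expect holds since the coefficient of $f^{(|\bs\gamma|)}$ is just $\xx^{\bs\gamma}/\|\xx\|^{|\bs\gamma|}$. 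Equivalently, I would prove directly by induction on $|\bs\gamma|$ that $\sum_m |f^{(m)}(t)|\,t^{m}\,|c_{m}|\leq C(Ka)^{|\bs\gamma|}|\bs\gamma|!$, using the recursion displayed in the proof of the corollary. In that recursion each differentiation step either raises $m$, costing a factor $m+1\leq|\bs\gamma|+1$ absorbed by the new $m!$, or multiplies by at most $(2|\bs\gamma|-1)$ via the $\ell$ and $\beta_j$ factors. This is the step I expect to be the delicate part.

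If this bookkeeping becomes too cumbersome, the fallback is cleaner. All kernels in question are real-analytic functions of $\xx-\bs z$ away from the diagonal. For multiquadrics this holds everywhere in $\xx$, while for polyharmonic splines it holds on $\xx\neq\bs z$, since $\|\xx\|_2^\ell$ ($\ell$ odd) and $\|\xx\|_2^\ell\log\|\xx\|_2$ extend holomorphically to the polydisc of radius $\|\xx\|_2/(2\sqrt d)$ around $\xx$. A multivariate Cauchy estimate on that polydisc then yields \eqref{HM_eg:kernel_estimate} directly with $r\sim 1/(2\sqrt d)$, after bounding the holomorphic extension by $C\|\xx\|_2^\ell(1+|\log\|\xx\|_2|)$, which is bounded on $D$ (resp.\ by the multiquadric's values, using $\|\xx\|_2\leq R$).
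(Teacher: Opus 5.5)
Your treatment of odd polyharmonic splines is the paper's proof: since $f^{(m)}\equiv 0$ for $m>\ell$ and $|f^{(m)}(t)|\le\ell!\,t^{\ell-m}$, the prefactor $4^{|\bs\alpha|}|\bs\alpha|!$ of Corollary~\ref{cor:BoundOnDerivative} already yields a single factorial. For the remaining kernels the paper only asserts that they ``can be proven the same way''. Your diagnosis shows this is not literally true. For $f(t)=t^\ell\log t$ one has $f^{(m)}(t)=(-1)^{m-\ell-1}\ell!\,(m-\ell-1)!\,t^{\ell-m}$ for $m>\ell$. The multiquadric profile has genuine singularities at $\pm ic$ and therefore also $m!$-growth. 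So the corollary as stated only gives a bound of order $(|\bs\alpha|!)^2$.

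Both of your repairs work. You left the weighted bookkeeping as an expectation, but it goes through. The recursion in the proof of Corollary~\ref{cor:BoundOnDerivative} gives $S^{(n+1)}_m\le S^{(n)}_{m-1}+2(2n-1)S^{(n)}_m$ for the coefficient sums attached to $f^{(m)}$ at order $n$. This closes under the hypothesis $S^{(n)}_m\le 5^n n!/m!$; with this crude recursion the base $4$ has to be enlarged. Combined with $|f^{(m)}(t)|\le C_0a^m m!\,t^{-m}$, it then produces a single factorial.

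Your fallback is complete as stated. On the closed polydisc of radius $\|\xx\|_2/(2\sqrt d)$ one has $\operatorname{Re}\sum_i w_i^2\ge\|\xx\|_2^2/4$ and $|\sum_i w_i^2|\le\frac94\|\xx\|_2^2$. Hence the principal-branch extensions are holomorphic there with the sup bounds you claim. Cauchy's estimate together with $(\bs\alpha+\bs\beta)!\le(|\bs\alpha|+|\bs\beta|)!$ then gives \eqref{HM_eg:kernel_estimate} with $r=1/(2\sqrt d)$ for all kernels simultaneously.

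The trade-off is as follows. The real-variable route stays elementary and keeps a dimension-free $r$, but it requires the $1/m!$-weighted coefficient bound whenever $f^{(m)}$ grows factorially. The complex-analytic route is shorter and uniform across kernel families. Its price is $r\sim d^{-1/2}$, which weakens \eqref{eq:CompressionError} for a given admissibility parameter $\rho$.
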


\begin{proof}
We only show the claim for the polyharmonic spline in the case that $ k $ is
odd. All other statements can be proven the same way. Since the kernel is
radial, we use Corollary~\ref{cor:BoundOnDerivative} with $ f(r) = r^\ell $ for
fixed, odd $ \ell $. 

There holds 
\[
f^{(m)}(r) = \frac{\ell!}{(\ell-m)!} r^{\ell-m},\] 
which yields
$| f^{(m)}(r) | \leq \ell! r^{\ell-m} $. Inserting this into the bound from
Corollary~\ref{cor:BoundOnDerivative}, we obtain 
\begin{align*}
  |\partial^{\bs{\alpha}} f(\|{\bs x}\|_2) | &\leq 4^{|\bs{\alpha}|} 
| \bs{\alpha} |!\sum_{m=1}^{|\bs\alpha |} \ell!\|{\bs x}\|_2^{\ell-m}
\|{\bs x}\|_2^{- | \bs{\alpha}| + m} 
= 
4^{|\bs{\alpha}|} 
| \bs{\alpha} |! \|{\bs x}\|_2^{\ell-| \bs{\alpha}| }
\sum_{m=1}^{|\bs\alpha |} \ell!\\
&= 4^{|\bs{\alpha}|} 
| \bs{\alpha} |! \|{\bs x}\|_2^{\ell-| \bs{\alpha}| }|\bs\alpha|\ell!
\leq 8^{|\bs\alpha|}\ell!| \bs{\alpha} |! \|{\bs x}\|_2^{\ell-| \bs{\alpha}| },
\end{align*}
for any multi-index $ \bs{\alpha} \in \N^d_0$.
Since $  D $ is assumed to be bounded, 
we have $ \|{\bs x}-{\bs y}\|_2 \leq \operatorname{diam}( D) $
for all \({\bs x},{\bs y}\in D\), which yields
\begin{align*}
\big|\partial^{\aalpha}\partial^{\bs{\beta}} 
\Kcal(\xx, \bs{y})\big| \leq 
8^{|\bs\alpha+\bs\beta|}\ell! 
\operatorname{diam}( D)^\ell 8^{|\bs{\alpha}+{\bs\beta}|} 
(|\bs{\alpha}|+|\bs\beta|)! \| \xx - \bs{y} \|_2^{- (|\bs{\alpha}|+|\bs\beta|)}.
\end{align*}
Hence, the polyharmonic spline with $ \ell $ odd is asymptotically smooth 
with $ C = \ell! \operatorname{diam}( D)^\ell$ and 
$ r = 1/8$, compare \eqref{HM_eg:kernel_estimate}.
\end{proof}

\subsection{Samplet representation of conditionally positive definite kernels}
We now consider the samplet representation of the linear system
\eqref{eq:saddlepoint} for an asymptotically smooth kernel \(\Kcal\).
Applying the partial samplet transform
\[
\begin{bmatrix}{\bs T} & \\ & {\bs I}\end{bmatrix}\in\Rbb^{(N+n)\times (N+n)}
\]
to \eqref{eq:saddlepoint}, we obtain
\[
\begin{bmatrix}{\bs T} & \\ & {\bs I}\end{bmatrix}
\begin{bmatrix}{\bs K}_\nu& {\bs P}\\ {\bs P}^\intercal &{\bs 0}\end{bmatrix}
\begin{bmatrix}{\bs T} & \\ & {\bs I}\end{bmatrix}^\intercal
\begin{bmatrix}{\bs T} & \\ & {\bs I}\end{bmatrix}
\begin{bmatrix}{\bs c}\\ {\bs d}\end{bmatrix}=
\begin{bmatrix}{\bs T} & \\ & {\bs I}\end{bmatrix}\begin{bmatrix}{\bs y}\\ 
{\bs 0}
\end{bmatrix}.
\]
This equation can be simplified towards
\begin{equation}\label{trafProb}
\begin{bmatrix}{\bs T}{\bs K}_\nu{\bs T}^\intercal & {\bs T}{\bs P}\\ 
{\bs P}^\intercal{\bs T}^\intercal &{\bs 0}\end{bmatrix}
\begin{bmatrix}{\bs T}{\bs c}\\ {\bs d}\end{bmatrix}=
\begin{bmatrix}{\bs T}{\bs y}\\ {\bs 0}.
\end{bmatrix}
\end{equation}

In \cite{GMQ26}, it has been proven that the scaling distributions correspond 
to the evaluation of the discrete orthogonal polynomials with respect to the 
sample measure at the data sites evaluated at the very same data sites. 
Therefore, the scaling
distribution exhibit successively more vanishing moments. Given that
the samplets exhibit at least \(q+1\) vanishing moments and assuming
\(\Pcal_q\)-unisolvent data sites, this implies
\[
{\bs T}{\bs P}=\begin{bmatrix}{\bs R}\\ {\bs 0}\end{bmatrix}.
\]
Here, the the upper block \({\bs R}\in\Rbb^{n\times n}\) contains the 
coefficients of the representation of the 
vectors \([p_j({\bs x}_i)]_{i=1,\ldots,N}\) for \(j=1,\ldots, n\)
with respect to the first \(n\) scaling distributions. If 
\(p_1,\ldots,p_n\) is the monomial basis and scaling distributions are sorted
with increasing degree, then the matrix \({\bs R}\) is even upper triangular.

To solve the transformed problem \eqref{trafProb}, it is thus straightforward to 
employ a null space approach, see \cite{BGL05}. A similar reduction
for positive  definite kernels is already found in \cite{CastrillonCandas2024}.
We introduce the orthogonal projector onto the range of \({\bs P}\)
given by
\[
{\bs Q}\isdef {\bs P}({\bs P}^\intercal{\bs P})^{-1}{\bs P}^\intercal.
\]
Then, there holds \({\bs Q}{\bs P}={\bs P}\) as well as \({\bs Q}^{2}={\bs Q}\).
If \({\bs Q}\) is represented in samplet coordinates, we specifically find
\[
{\bs Q}= 
\begin{bmatrix}{\bs R}\\ {\bs 0}\end{bmatrix}({\bs R}^\intercal{\bs R})^{-1}
[{\bs R}^\intercal\ \ {\bs 0}]
=\begin{bmatrix}{\bs I}\\ {\bs 0}\end{bmatrix}[{\bs I}\ \ {\bs 0}]
=\begin{bmatrix}{\bs I} & \\ & {\bs 0}\end{bmatrix}
\]
with \({\bs I}\in\Rbb^{n\times n}\). Similarly, the orthogonal projector
onto the kernel of \({\bs P}\) is
given by
\[
{\bs I}-{\bs Q}=\begin{bmatrix}{\bs 0} & \\ & {\bs I}\end{bmatrix}
\]
with \({\bs I}\in\Rbb^{(N-n)\times(N-n)}\).
Now, applying the null space approach leads to solving
\[
({\bs I}-{\bs Q}){\bs T}{\bs K}_\nu{\bs T}^\intercal({\bs I}-{\bs Q})
{\bs T}{\bs c}=
({\bs I}-{\bs Q}){\bs T}{\bs y}.
\]
To this end, we first observe that 
\({\bs T}{\bs K}_\nu{\bs T}^\intercal={\bs K}^\Sigma+\nu^2{\bs I}\),
since the samplet transform is an isometry.
Next, we partition the samplet transformed kernel matrix into the contribution 
of polynomials and the part orthogonal to polynomials, i.e.,
\[
{\bs T}{\bs K}_\nu{\bs T}^\intercal=\begin{bmatrix}
({\bs K}_{PP}+\nu^2{\bs I}) & {\bs K}_{P\Psi} \\
{\bs K}_{\Psi P} & ({\bs K}_{\Psi\Psi}+\nu^2{\bs I})
\end{bmatrix}
\quad\text{and}\quad
{\bs T}{\bs c}=\begin{bmatrix}{\bs c}_P\\ {\bs c}_{\Psi}
\end{bmatrix},\
{\bs T}{\bs y}=\begin{bmatrix}{\bs y}_P\\ {\bs y}_{\Psi}
\end{bmatrix}.
\]
Consequently, it suffices to solve
\begin{equation}\label{eq:Kpsipsi}
({\bs K}_{\Psi\Psi}+\nu^2{\bs I}){\bs c}_\Psi={\bs y}_\Psi
\end{equation}
and to set \({\bs c}_P={\bs 0}\), which guarantees
\({\bs P}^\intercal{\bs c}={\bs 0}\).
The coefficient vector \({\bs d}\) is then obtained by employing the Moore-Penrose inverse
\[
{\bs P}^\intercal{\bs P}{\bs d}={\bs P}^\intercal
\big({\bs y}-({\bs K}+\nu^2{\bs I}){\bs c}\big).
\]
In samplet coordinates, this becomes
\begin{equation}\label{eq:PolPart}
{\bs R}^\intercal{\bs R}{\bs d}
={\bs R}^\intercal({\bs y}_P-{\bs K}_{P\Psi}{\bs c}_{\Psi})
\quad\Longleftrightarrow\quad
{\bs R}{\bs d}
=({\bs y}_P-{\bs K}_{P\Psi}{\bs c}_{\Psi}),
\end{equation}
since \({\bs c}_P={\bs 0}\) and \({\bs R}\) is invertible.

From the previous derivation, the following theorem is evident.

\begin{theorem}\label{thm:SampSPD}
Let the kernel \(\Kcal\) be 
\(\Pcal_q\)-conditionally positive (semi-)definite and consider samplets with at
least \(q+1\) vanishing moments. Then, the matrix 
\({\bs K}_{\Psi\Psi}=({\bs I}-{\bs Q}){\bs K}^\Sigma({\bs I}-{\bs Q})\) 
is positive (semi-)definite.
\end{theorem}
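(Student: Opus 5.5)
The plan is to reduce the statement to the defining inequality \eqref{eq:condPosDef} by pulling vectors back through the orthogonal samplet transform. I identify ${\bs K}_{\Psi\Psi}$ with the lower-right $(N-n)\times(N-n)$ block of ${\bs K}^\Sigma={\bs T}{\bs K}{\bs T}^\intercal$. Equivalently, it is the compression $({\bs I}-{\bs Q}){\bs K}^\Sigma({\bs I}-{\bs Q})$ restricted to the range of ${\bs I}-{\bs Q}$, since in samplet coordinates ${\bs I}-{\bs Q}=\operatorname{diag}({\bs 0},{\bs I})$.

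First I take an arbitrary ${\bs c}_\Psi\in\Rbb^{N-n}$ and embed it as ${\bs w}\isdef[{\bs 0}^\intercal,{\bs c}_\Psi^\intercal]^\intercal\in\Rbb^N$. I then set ${\bs c}\isdef{\bs T}^\intercal{\bs w}$. Since ${\bs T}$ is orthogonal, ${\bs c}$ is well defined, and ${\bs c}={\bs 0}$ if and only if ${\bs c}_\Psi={\bs 0}$. By construction,
\[
{\bs c}_\Psi^\intercal{\bs K}_{\Psi\Psi}{\bs c}_\Psi={\bs w}^\intercal{\bs T}{\bs K}{\bs T}^\intercal{\bs w}={\bs c}^\intercal{\bs K}{\bs c}.
\]

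The key step is to verify ${\bs c}\in\ker{\bs P}^\intercal$. Using the block structure ${\bs T}{\bs P}=[{\bs R}^\intercal\ {\bs 0}]^\intercal$ derived above, I compute
\[
{\bs P}^\intercal{\bs c}=({\bs T}{\bs P})^\intercal{\bs w}=[{\bs R}^\intercal\ \ {\bs 0}]\begin{bmatrix}{\bs 0}\\{\bs c}_\Psi\end{bmatrix}={\bs 0}.
\]
More intrinsically, ${\bs c}$ is a linear combination of samplet coefficient vectors, and each samplet annihilates $\Pcal_q$ by the vanishing moment property (Theorem~\ref{theo:waveletProperties}(ii)). So $\sum_i c_i p(\bs x_i)=0$ for all $p\in\Pcal_q$. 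The point to be careful about is that the $N-n$ indices of the $\Psi$ block are exactly samplets, or scaling distributions beyond the first $n$, that annihilate $\Pcal_q$. This is precisely the content of the cited result that scaling distributions are discrete orthogonal polynomials. Together with unisolvency, it gives the zero lower block of ${\bs T}{\bs P}$, so I rely on that block identity rather than reproving it.

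Finally I apply the $\Pcal_q$-conditional positive (semi-)definiteness of $\Kcal$ to the pairwise distinct points $X$. This gives ${\bs c}^\intercal{\bs K}{\bs c}\ge0$, and in the CPD case the inequality is strict for ${\bs c}\ne{\bs 0}$, i.e.\ for ${\bs c}_\Psi\neq{\bs 0}$. This yields positive (semi-)definiteness of ${\bs K}_{\Psi\Psi}$. Adding $\nu^2{\bs I}$ preserves this, so it also covers \eqref{eq:Kpsipsi}. The only real obstacle is the kernel-membership step above, which reduces to the established structure of ${\bs T}{\bs P}$.
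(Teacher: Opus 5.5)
Your proof is correct and is essentially the argument the paper has in mind when it calls the theorem evident from the preceding derivation. In samplet coordinates ${\bs T}{\bs P}=[{\bs R}^\intercal\ {\bs 0}]^\intercal$, so vectors supported on the $\Psi$ block pull back under ${\bs T}^\intercal$ to exactly the elements of $\ker{\bs P}^\intercal$, and positive (semi-)definiteness then follows from \eqref{eq:condPosDef}; for the definite case, your reading of ${\bs K}_{\Psi\Psi}$ as the lower-right $(N-n)\times(N-n)$ block, rather than the singular $N\times N$ matrix $({\bs I}-{\bs Q}){\bs K}^\Sigma({\bs I}-{\bs Q})$, is the right one.
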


\section{Numerical results}\label{sec:numerics}
We evaluate the approach under consideration in three settings: 
universal Kriging on the \texttt{Stanford bunny}\footnote{%
\url{http://graphics.stanford.edu/data/3Dscanrep/}\label{fn:stanford}}
and image registration tasks on the two-dimensional 
\texttt{workteam}\footnote{%
\url{https://it.mathworks.com/help/vision/ref/%
vision.cascadeobjectdetector-system-object.html}} image from \textsc{MATLAB} 
as well as on a three-dimensional mesh distortion of the 
\texttt{Stanford armadillo}%
\hyperlink{fn:shared}{\textsuperscript{\ref{fn:stanford}}}.

Throughout all experiments, we employ the samplet-compressed representation of
the kernel matrix described in Theorem~\ref{thm:compression}. The approximation
accuracy is governed by the admissibility parameter, which we set to $\rho = d$
and by the number of vanishing moments. Specifically, we choose $q = 3$ for the
universal Kriging experiment and $q = 5$ for the two image registration
experiments. In addition, we perform an \emph{a posteriori} thresholding of
small matrix entries using a relative tolerance of $\kappa = 10^{-8}$. 

All computations have been carried out on a MacBook Pro equipped with an 
Apple M2 Max processor and 32 GB of main memory. The implementation of the
samplet compression is available as software package
\texttt{FMCA}\footnote{\url{ https://github.com/muchip/fmca}}.

\subsection{Universal Kriging on the Stanford bunny}\label{sec:num_UK}

We illustrate the samplet-based universal Kriging predictor of
Section~\ref{subsec:UniversalKriging} at $N = 300\,000$ scattered data sites
sampled from the Stanford bunny surface with bounding 
box $[0,1]\times[0,0.99]\times[0,0.77]$. As {generalized} covariance we use the 3D
polyharmonic kernel $\Kcal_\ell( r)\isdef -  r/\ell$ with lengthscale parameter
$\ell = 0.5$. 

The ground truth is a deterministic field $f$, consisting of a
polynomial trend $m(\bs x)=\bs p(\bs x)\bs\beta$ and $J=8$ localized features,
i.e.,
\begin{equation}\label{eq:uk_truth_field}
  f(\bs x) = m(\bs x)
    + \sum_{j=1}^{J} a_j\exp\!\big(-\|\bs x-\bs c_j\|_2^2/(2w_j^2)\big),
\end{equation}
with drift coefficients $\bs\beta=(2,1.5,-2,1)^\intercal\in\Rbb^4$. The source
of randomness is the observation noise,
\[
  y_i = f(\bs x_i) + \varepsilon_i,\quad
  \varepsilon_i\stackrel{\text{iid}}{\sim}\Ncal(0,\nu^2),\quad \nu=0.05.
\]
The centers
$\bs c_j$ are shown in Figure~\ref{fig:uk}. Their
coordinates, with the relative amplitudes $a_j$ and widths $w_j$, are reported
in Table~\ref{tab:uk_truth}.

\begin{table}[htb]
\centering
\begin{tabular}{lcccccccc}
\toprule
\(j\)   & 1 & 2 & 3 & 4 & 5 & 6 & 7 & 8\\
\midrule
\(a_j\) & \(0.80\) & \(-0.60\) & \(0.50\) & \(-0.45\)
        & \(0.65\) & \(-0.50\) & \(0.55\) & \(-0.70\)\\
\(w_j\) & \(0.06\) & \(0.10\) & \(0.08\) & \(0.12\)
        & \(0.07\) & \(0.09\) & \(0.11\) & \(0.08\)\\
$x_j$ & 0.11 & 0.49 & 0.62 & 0.12 & 0.14 & 0.04 & 0.19 & 0.90 \\
$y_j$ & 0.78 & 0.02 & 0.62 & 0.29 & 0.24 & 0.50 & 0.78 & 0.36 \\
$z_j$ & 0.42 & 0.28 & 0.56 & 0.62 & 0.60 & 0.47 & 0.07 & 0.46 \\
\bottomrule
\end{tabular}
\caption{Ground-truth field of Section~\ref{sec:num_UK}: coordinates 
  \( \bs c_j = (x_j, y_j, z_j)^\intercal\), amplitudes
\(a_j\) and widths \(w_j\).}
\label{tab:uk_truth}
\end{table}

We remove two subsets of the data before fitting the model. One is a random test
set of $15\,000$ points, where nearby data still exists, to test interpolation.
The other is a \emph{hole}, where we remove all the points inside a ball of
radius $0.12$ around the surface point closest to the anchor
$(1,0.7,0.5)^\intercal$, to test extrapolation. The remaining sites form the
training set on which the predictor is built. 
The posterior mean \eqref{eq:mean-as-interpolant} is a single kernel sum and is
evaluated at all \(N\) sites, while the posterior variance requires by
\eqref{eq:Kriging-variance} one back-substitution against the factor of
\({\bs K}_{\Psi\Psi}+\nu^2{\bs I}\) per evaluation point, and is therefore
computed only at $10\,000$ representative sites, where $2\,000$ are taken from
the hole and $8\,000$ from the test set.

The effect of the observational noise is reflected in the test error, which
is $e_{\mathrm{test}} = 2.44\cdot10^{-2}$ for a noise standard deviation of
$\nu = 0.05$. A more detailed analysis of the error for different noise
levels is provided in Section~\ref{sec:num_error}. Moreover, the posterior
standard deviation at the test sites remains, on average, around
$5\cdot10^{-2}$, while inside the hole it increases by roughly a factor of
$5$, indicating greater uncertainty where data are missing.

Figure~\ref{fig:uk} shows the posterior mean and the posterior standard
deviation on the bunny surface. The mean recovers the trend and the localized
features of the ground truth, while the standard-deviation map is flat over the
sampled region and lights up over the hole.

\begin{figure}[htb]
\centering
\begin{subfigure}{0.32\textwidth}
  \includegraphics[width=\textwidth]{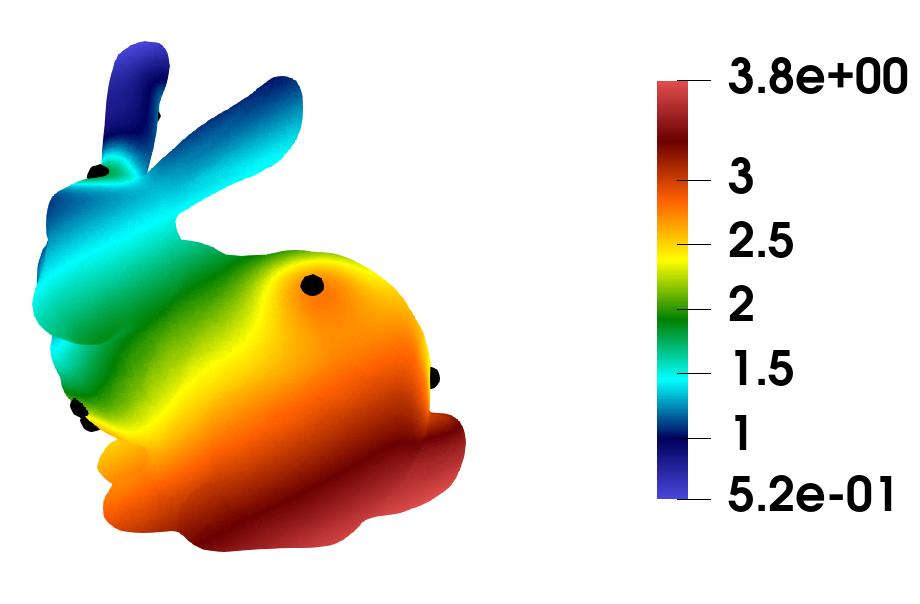}
  \caption{ground truth }
\end{subfigure}\hfill
\begin{subfigure}{0.32\textwidth}
  \includegraphics[width=\textwidth]{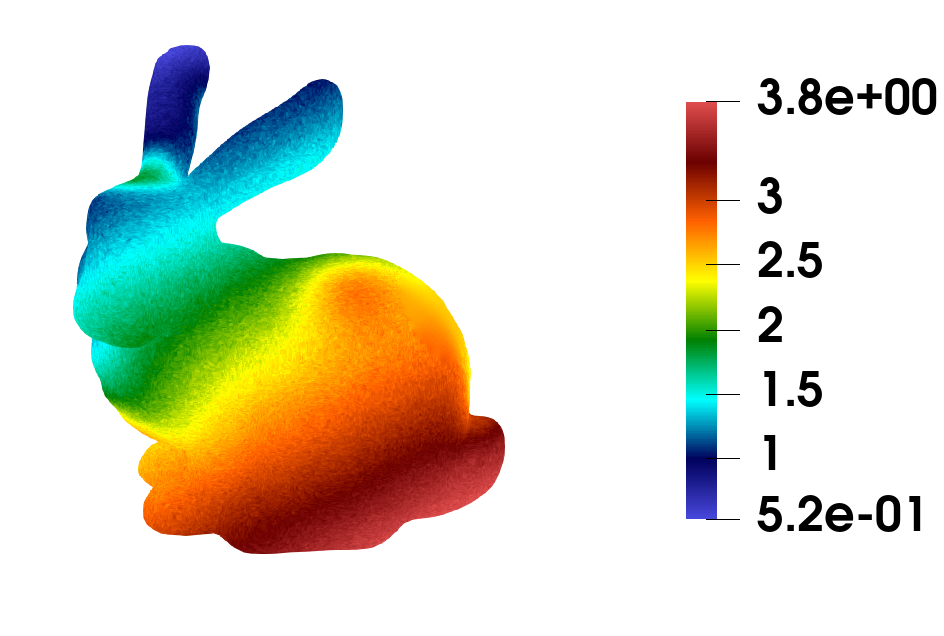}
  \caption{posterior mean }
\end{subfigure}\hfill
\begin{subfigure}{0.32\textwidth}
  \includegraphics[width=\textwidth]{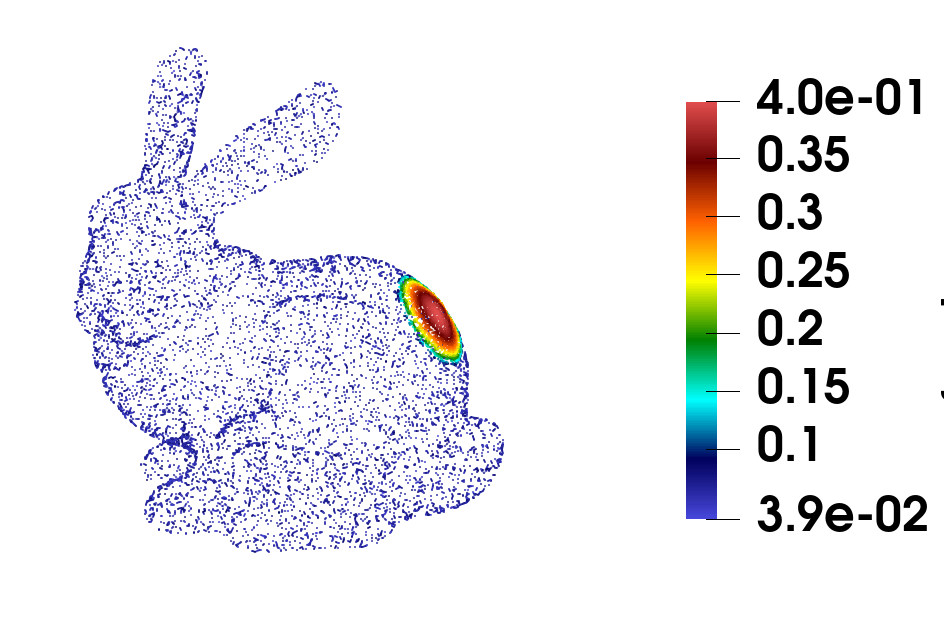}
  \caption{posterior std }
\end{subfigure}
\centering
\begin{subfigure}{0.32\textwidth}
  \includegraphics[width=\textwidth]{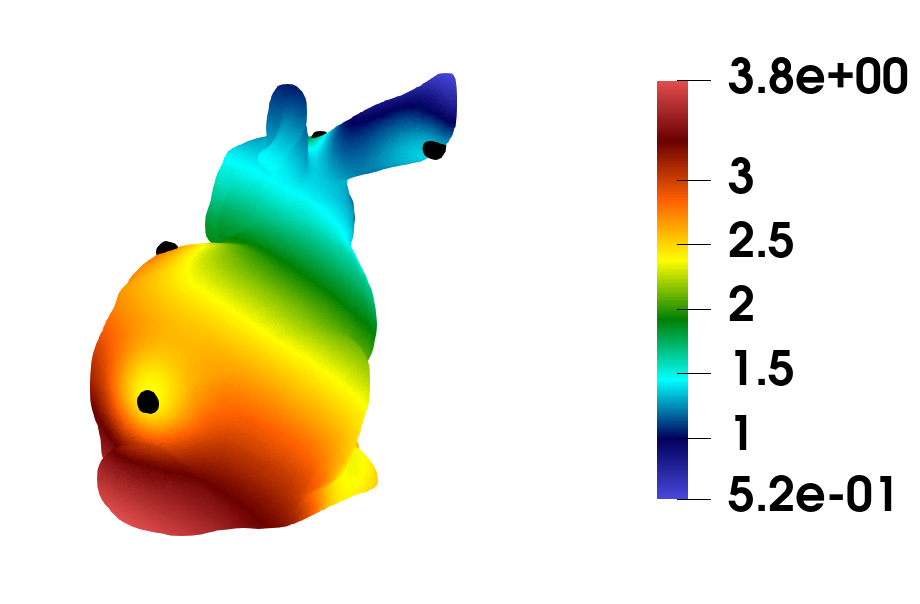}
  \caption{ground truth }
\end{subfigure}\hfill
\begin{subfigure}{0.32\textwidth}
  \includegraphics[width=\textwidth]{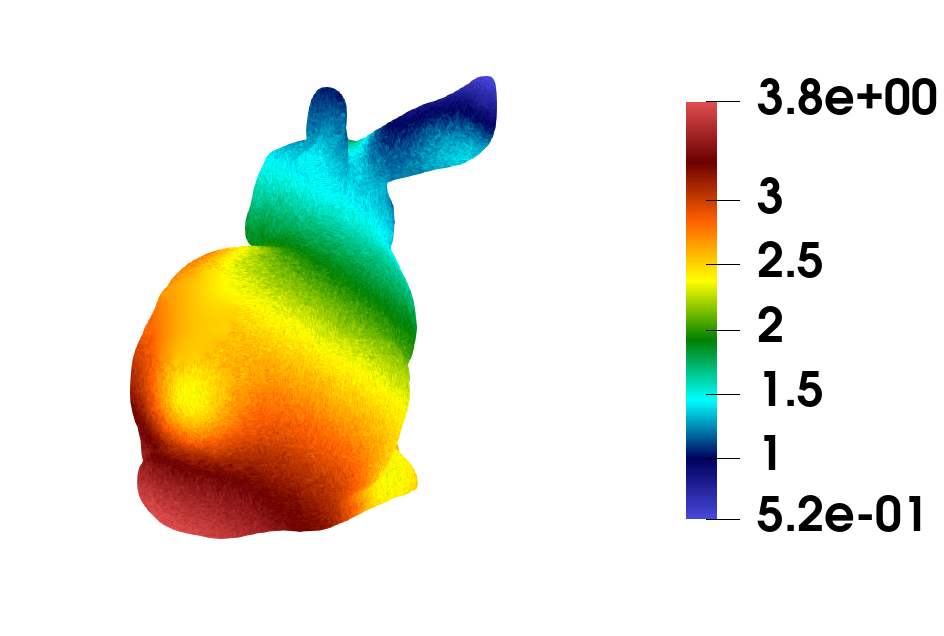}
  \caption{posterior mean }
\end{subfigure}\hfill
\begin{subfigure}{0.32\textwidth}
  \includegraphics[width=\textwidth]{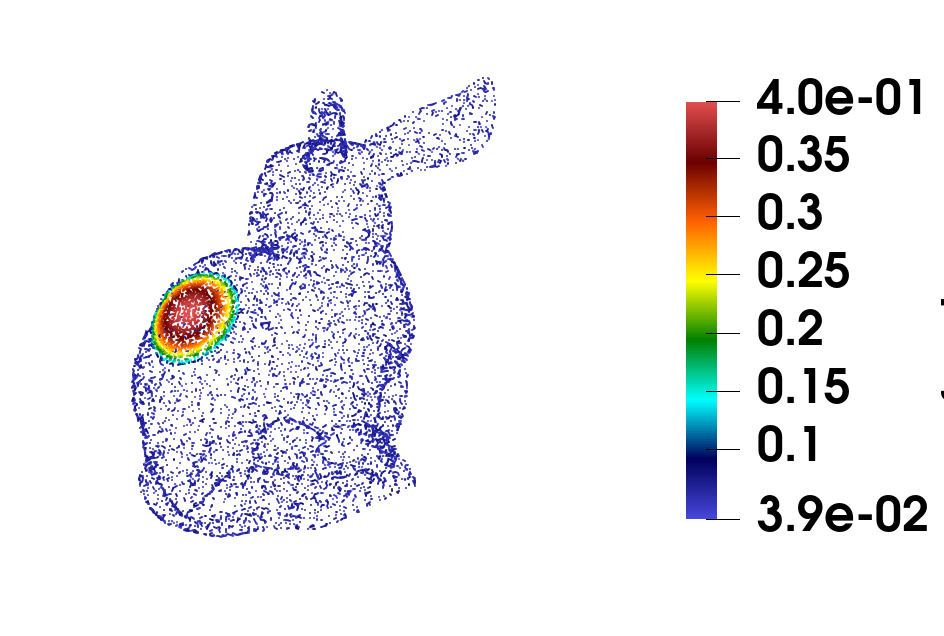}
  \caption{posterior std }
\end{subfigure}
\caption{Universal Kriging at the bunny surface. The posterior mean (B-E)
reproduces the reference field (A-D) up to the noise level, and the posterior
standard deviation (C-F) is small over the sampled surface and increases inside
the withheld data hole. The black dots appearing on Figures (A-D) are the centers
of the 8 bumps.}
\label{fig:uk}
\end{figure}

\subsection{Two-dimensional image warping}\label{sec:num_registration}
As further application, we consider landmark-based image warping for geometric
distortion correction. Let $ D\subseteq\Rbb^2$ be a domain. Given $N$ landmark
pairs $\{(\bs x_i,\bs y_i)\}_{i=1}^N\subseteq D\times D$, we fit a deformation
field $\bs\Phi\colon D\to D$ with $\bs\Phi(\bs x_i)=\bs y_i$ that is as smooth
as possible between the landmarks. Each component $\Phi_k$, $k=1,2$,  is
approximated using the thin plate spline kernel
\(
  \Kcal( r)\isdef r ^2\log (r),
\)
which is $\Pcal_1$-conditionally positive definite in $\Rbb^2$. Each component
of the deformation field minimizes the
bending energy
\begin{equation}\label{eq:bending}
  J[\Phi_k]=\int_{\Rbb^2}\big[(\partial_{11}^2\Phi_k)^2
    +2(\partial_{12}^2\Phi_k)^2+(\partial_{22}^2\Phi_k)^2\big] d\bs x
\end{equation}
subject to $\Phi_k(\bs x_i)=y_{i,k}$,
see \cite{Duchon1977,bookstein1997landmark}.
Moreover, there holds the representation
\begin{equation}\label{eq:tps-rep}
  \Phi_k(\bs x)=\sum_{j=1}^N c_{j,k} \Kcal(\bs x,\bs x_j)
    +\sum_{\ell=1}^{3}d_{\ell,k} p_\ell(\bs x),
  \qquad \bs P^\intercal\bs c_k=\bs 0,
\end{equation}
where $\bs c_k,\bs d_k$ solve the CPD saddle-point system
\eqref{eq:saddlesystem_noiseless} with right-hand side
$\bs y_k=[y_{1,k},\dots,y_{N,k}]^\intercal$. 

Both components share the centers $\bs x_1,\ldots, \bs x_N$. Hence, a single
samplet compression of $\bs K$ and a single Cholesky factorization of the
regularized samplet-block $\bs K_{\Psi\Psi}+\lambda \bs I$
are computed once and reused for the two right-hand sides $\bs y_1,\bs y_2$ through the null-space solver of Section~\ref{sec:SampletsCPD}.

To validate the method, we deform the test image with a known map
$\bs g\colon [0,1]^2\to[0,1]^2$ that rotates points about the centre
$\bs\gamma=(\tfrac12,\tfrac12)$ by an angle that is largest at the centre and
fades to zero at radius $R = 0.4$. The deformation also includes two localized
Gaussian displacements. With $r_\gamma=\|\bs x-\bs\gamma\|_2$ and rotation angle
\[
  \theta(r_\gamma)=(1-r_\gamma^2/R^2)^2\ \ \text{for } r_\gamma<R,
  \qquad \theta(r_\gamma)=0\ \ \text{otherwise},
\]
the deformation field hence reads
\begin{equation}\label{eq:gtruth}
  \bs g(\bs x)=\bs\gamma
   +\begin{bmatrix}\cos(\theta(r_\gamma)) & -\sin(\theta(r_\gamma))\\[2pt]
                   \sin(\theta(r_\gamma)) & 
                 \phantom{-}\cos(\theta(r_\gamma))\end{bmatrix}
    (\bs x-\bs\gamma)
   +\sum_{m=1}^{2}\bs a_m \exp\!\big(-\|\bs x-\bs b_m\|^2/(2w_m^2)\big).
\end{equation}
Since $\theta(R)=0$, the map is the identity outside the disk of radius 0.4.
In particular, $\bs g$ maps $[0,1]^2$ into itself and is a diffeomorphism for 
small $\theta_0$. The parameters are listed in Table~\ref{tab:warp_gt}.

\begin{table}[htb]
\centering
\begin{tabular}{ccccc}
\toprule
 & $\bs b_m$ & $\bs a_m$ & $w_m$\\
\midrule
swirl   & $(0.5,0.5)$ & --- & ---\\
bump 1 & $(0.30,0.30)$ & $(0.060, 0.040)$ & $0.12$\\
bump 2  & $(0.70,0.65)$ & $(-0.050, 0.050)$ & $0.10$\\
\bottomrule
\end{tabular}
\caption{Parameters of the ground-truth deformation \eqref{eq:gtruth} on the unit
square.}
\label{tab:warp_gt}
\end{table}

The source landmarks $\{\bs x_i\}_{i=1}^N$ form a regular $256\times256$ grid on
$[0.05,0.95]^2$, so that $N=65\,536$, with targets $\bs y_i=\bs g(\bs x_i)$.
We fit both the \emph{forward} warp $\bs\Phi\approx\bs g$, which quantifies the
reconstruction accuracy, and the \emph{inverse} warp $\bs\Psi\approx\bs g^{-1}$,
which renders the deformed image, see \cite{wolberg1990,glasbey1998review} for
details. We remark that forward warping, which pushes each input
pixel through $\bs\Phi$, may map onto non-integer locations and thus leave
unfilled holes in the output. 
Backward warping avoids this, since for every \emph{output} pixel
$\bs p$ one reads the input intensity at its pre-image,
\begin{equation}\label{eq:backwarp}
  I_{\mathrm{warp}}(\bs p)=I\big(\bs\Psi(\bs p)\big),
\end{equation}
so that every output pixel receives a value. Since the pre-image 
$\bs\Psi(\bs p)$ generally falls between input pixels, the intensity 
$I(\bs\Psi(\bs p))$ is obtained by resampling, here nearest-neighbor
interpolation, following the classical approach as in
\cite[Chapter 3.1]{wolberg1990}.

We assess the reconstruction with three quantities. The landmark residual
\[
  e_{\mathrm{land}}\isdef\bigg(\frac1N\sum_{i=1}^N
    \|\bs\Phi(\bs x_i)-\bs y_i\|^2\bigg)^{1/2}
\]
is the root-mean-square error at the landmarks. The remaining two are evaluated
on a regular $1000\times 1000$ grid $G\subset[0,1]^2$ and are the relative
$\ell_2$ field-recovery error
\[
e_{\bs\Phi}\isdef\|\bs\Phi-\bs g\|_{\ell_2(G)}/\|\bs g\|_{\ell_2(G)},
\]
and
the minimum Jacobian determinant 
\[
J_{\min}\isdef\min_{G}\det\nabla\bs\Phi
\]
computed by central differences. A positive $J_{\min}$ indicates an
orientation-preserving, fold-free deformation, which is the standard
admissibility criterion for a warp, see  \cite{sotiras2013survey}. 
The discretization parameters and the quantities of interests are
listed in Table~\ref{tab:warp_param} and the results are shown in
Figures~\ref{fig:warp_field} and~\ref{fig:warp_image}.
The regularization parameter is chosen rather small, i.e., $\lambda = 10^{-6}$,
and only serves preserving positive definiteness of the samplet compressed
and thresholded kernel matrix, so that the Cholesky
factorization is applicable. Its effect on the fit is negligible,
as the landmark residual $e_{\mathrm{land}}$ shows.

\begin{table}[htb]
\centering
\begin{tabular}{ccccccc}
\toprule
  $N$ & $N_{\text{eval}}$ & $\lambda$ & $e_{\mathrm{land}}$ & $e_{\bs\Phi}$ &
  $J_{\min}$ & $t_{\text{fit}} ~ [s]$\\
\midrule
$65\,536$ & $1\,048\,576$ & $10^{-6}$ & $1.3\cdot10^{-7}$ & $1.5\cdot10^{-4}$ &
0.37 & 44.85 \\
\bottomrule
\end{tabular}
\caption{Warping experiment, parameters and results. From left to right, we have
  the number of landmarks $N$, the cardinality of the
evaluation grid $N_{\text{eval}}$, the ridge parameter $\lambda$,
followed by the RMS landmark residual
$e_{\mathrm{land}}$, the relative $\ell_2$ field-recovery error $e_{\bs\Phi}$,
the minimum Jacobian determinant $J_{\min}$, and the fitting time $t$
for both warps.}
\label{tab:warp_param}
\end{table}

\begin{figure}[htb]
\tikzexternaldisable 
\centering
\begin{subfigure}{0.32\textwidth}\centering
  \begin{tikzpicture}\begin{axis}[warpaxis]
    \addplot graphics [xmin=0,xmax=1,ymin=0,ymax=1] {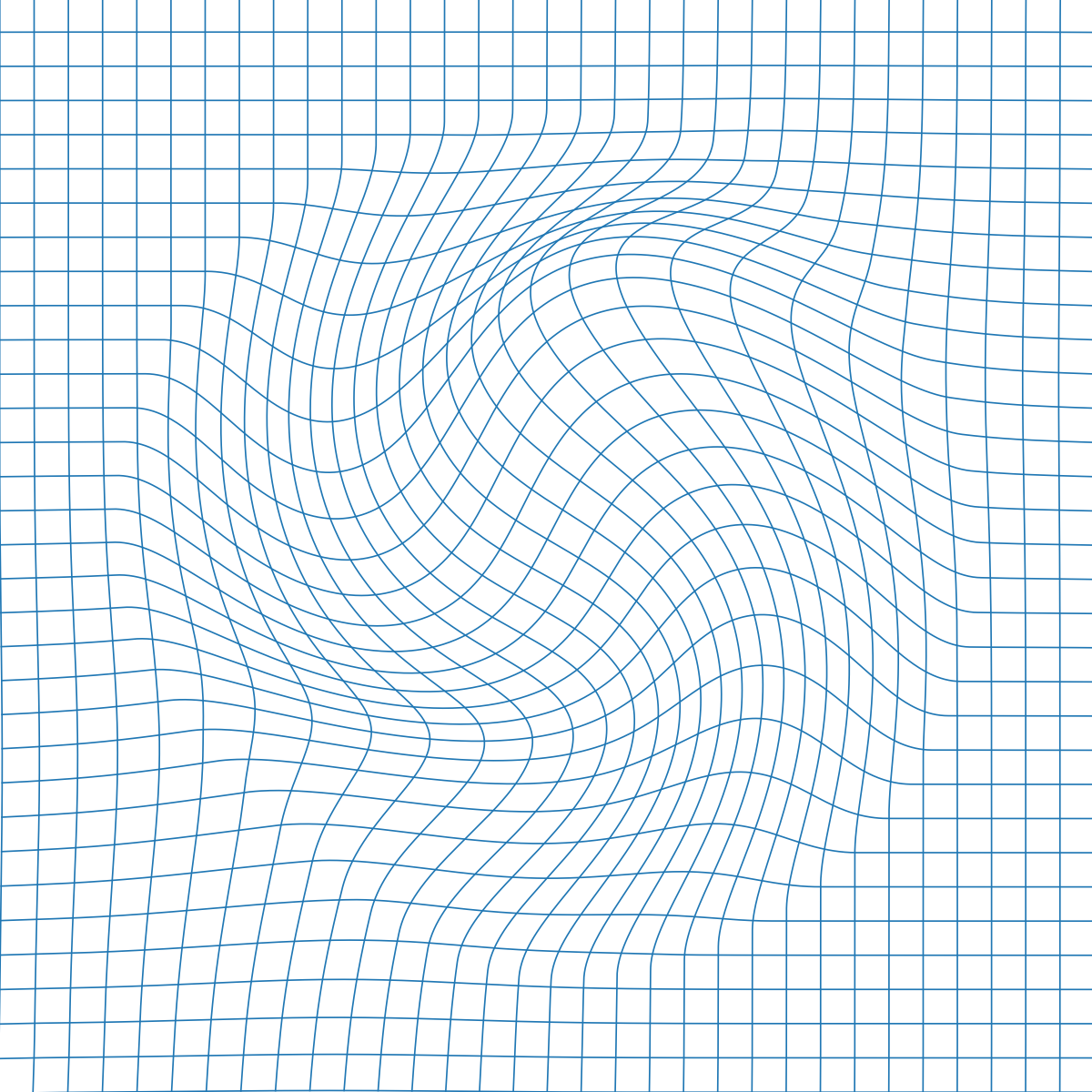};
  \end{axis}\end{tikzpicture}
  \caption{deformed grid $\bs\Phi$}
\end{subfigure}
\begin{subfigure}{0.32\textwidth}\centering
  \begin{tikzpicture}\begin{axis}[warpaxis, warpcbar, colormap name=viridis,
        point meta min=0, point meta max=0.174,
        colorbar style={ytick={0,0.05,0.1,0.15}}]
    \addplot graphics [xmin=0,xmax=1,ymin=0,ymax=1] { 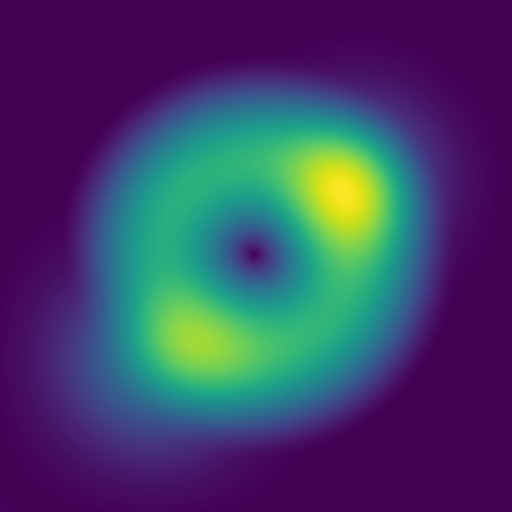};
  \end{axis}\end{tikzpicture}
  \caption{displacement $\|\bs\Phi(\bs x)-\bs x\|_2$}
\end{subfigure}
\begin{subfigure}{0.32\textwidth}\centering
\begin{tikzpicture}\begin{axis}[warpaxis, warpcbar, colormap name=coolwarm,
      point meta min=-1.021, point meta max=1.021,
      colorbar style={
        ytick={-1.02, -0.5, 0, 0.5, 1.02},        
        yticklabels={-1.02, -0.5, 0, 0.5, 1.02}}]
  \addplot graphics [xmin=0,xmax=1,ymin=0,ymax=1] {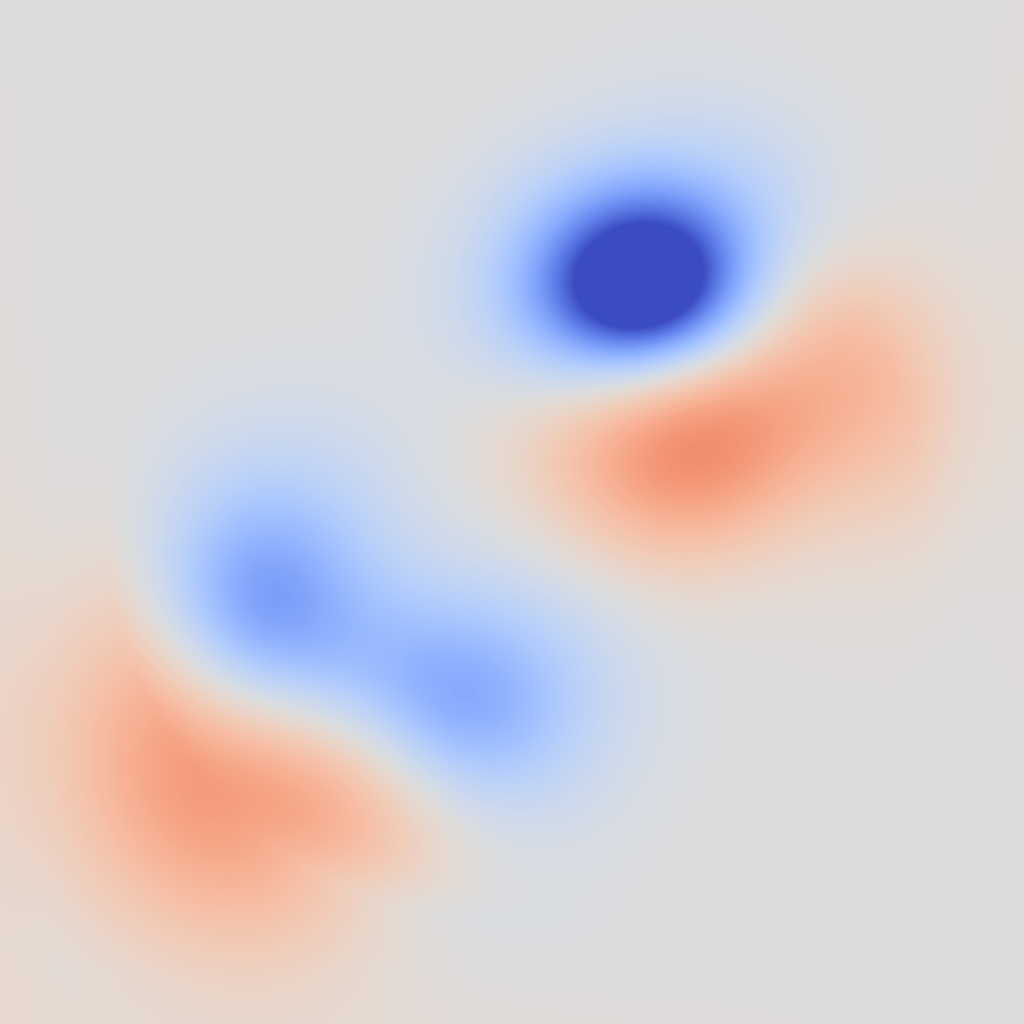};
\end{axis}\end{tikzpicture}
  \caption{$\log(\det\nabla\bs\Phi)$}
\end{subfigure}
\caption{Recovered deformation $\bs\Phi$ from $N=65\,536$ landmarks.}
\label{fig:warp_field}
\end{figure}

\begin{figure}[htb]
\tikzexternaldisable 
\centering
\begin{subfigure}{0.32\textwidth}\centering
  \begin{tikzpicture}\begin{axis}[warpaxis]
    \addplot graphics [xmin=0,xmax=1,ymin=0,ymax=1] { 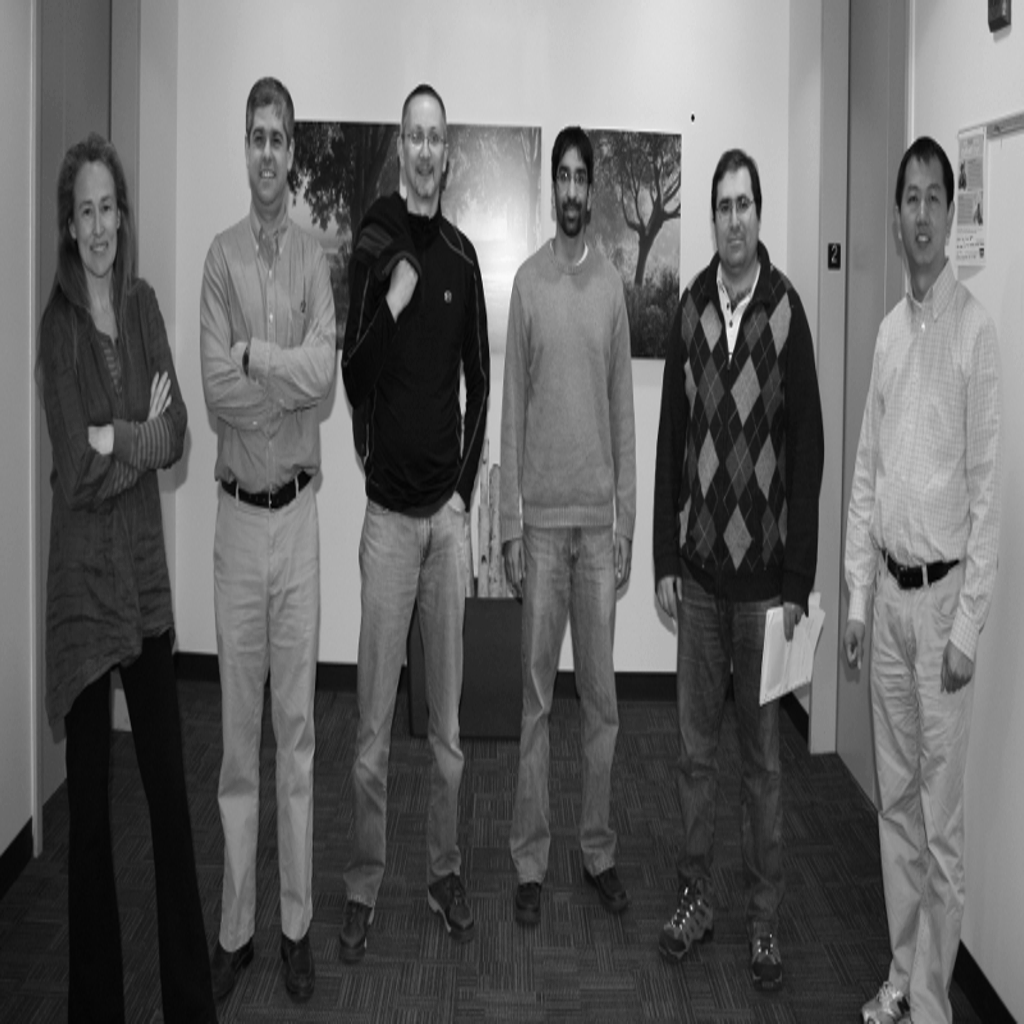};
  \end{axis}\end{tikzpicture}
  \caption{original}
\end{subfigure}
\begin{subfigure}{0.32\textwidth}\centering
  \begin{tikzpicture}\begin{axis}[warpaxis]
    \hspace{-0.5 cm} \addplot graphics [xmin=0,xmax=1,ymin=0,ymax=1] { 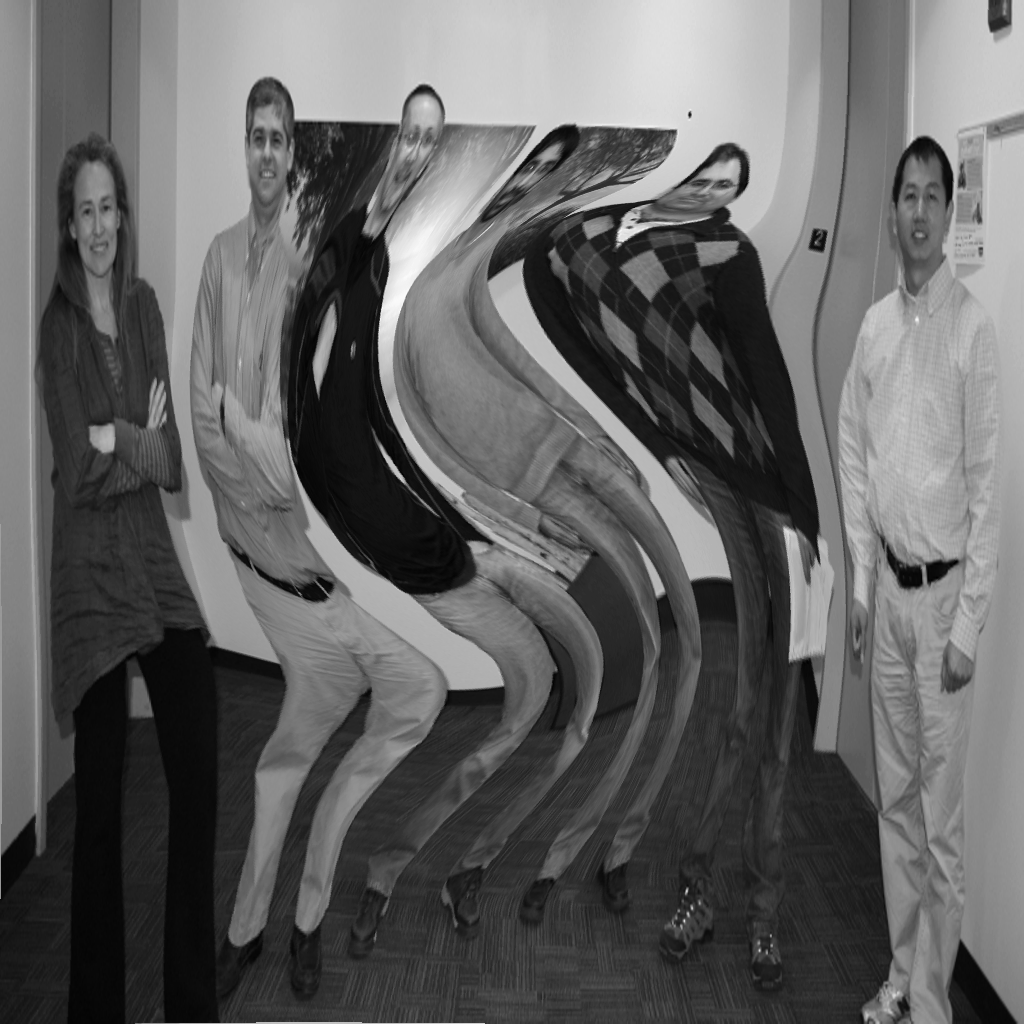};
  \end{axis}\end{tikzpicture}
  \hspace{-0.5 cm}  \caption{warped}
\end{subfigure}
\begin{subfigure}{0.32\textwidth}\centering
  \begin{tikzpicture}\begin{axis}[warpaxis, warpcbar, colormap name=coolwarm,
        point meta min=-224, point meta max=224,
        colorbar style={ytick={-200,-100,0,100,200}}, ]
    \addplot graphics [xmin=0,xmax=1,ymin=0,ymax=1] { 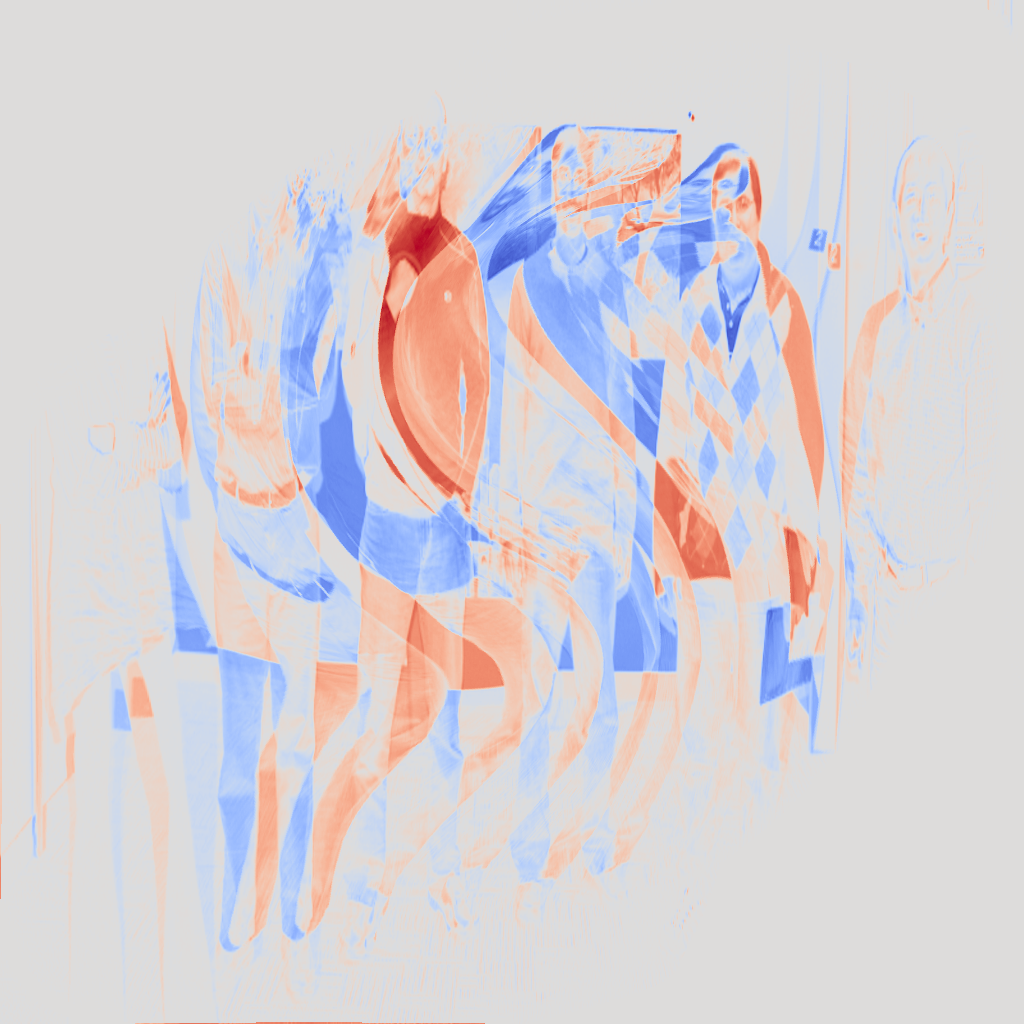};
  \end{axis}\end{tikzpicture}
  \caption{warped $-$ original}
\end{subfigure}
\caption{Backward warping of the test image through the inverse map $\bs\Psi$:
original (A), warped result (B), and their difference (C).}
\label{fig:warp_image}
\end{figure}


\subsection{Three-dimensional mesh deformation}\label{sec:num_armadillo}

As a three-dimensional counterpart of the planar image-warping test, we consider
landmark-based deformation of the Stanford armadillo. The reference geometry is
given by a high-resolution mesh with \(N_{\mathrm{all}}=2\,798\,914\)
vertices, from which we extract a subset of \(N=30\,096\) landmarks.
The geometry is contained in the bounding box
$
[-0.44,0.40]
\times [-0.57,0.43]
\times [-0.38,0.38].
$
We prescribe a synthetic ground-truth diffeomorphism
\(\bs g\colon\mathbb R^3\to\mathbb R^3\) acting on the coordinates
\(\bs p=[x,y,z]^\intercal\).
The deformation combines a twist around the vertical
axis, a parabolic bend, a radial stretch, and two localized Gaussian
displacements. With
\(
  \theta(y)=1.2y,
\)
the deformation is defined by
\begin{equation}\label{eq:gtruth_armadillo}
\bs g(\bs p)
={}
\begin{bmatrix}
\cos(\theta(y)) & 0 & -\sin(\theta(y))\\
0 & 1 & 0\\
\sin(\theta(y)) & 0 & \cos(\theta(y))
\end{bmatrix}
\bs p
+
\begin{bmatrix}
0.35y^2\\
0\\
0
\end{bmatrix}
+
0.6\|\bs p\|_2^2\bs p +
\sum_{m=1}^{2}\bs a_m
\exp\!\left(
-\frac{\|\bs p-\bs b_m\|_2^2}{2w_m^2}
\right).
\end{equation}
The parameters of the two localized Gaussian perturbations are listed in
Table~\ref{tab:warp_gt_armadillo}.

\begin{table}[htb]
\centering
\begin{tabular}{lccc}
\toprule
 & \(\bs b_m\) & \(\bs a_m\) & \(w_m\)\\
\midrule
bump 1 & \((0.12, 0.20, 0.10)\) & \((0.05, 0.00, 0.03)\) & \(0.12\) \\
bump 2 & \((-0.10, -0.18, 0.08)\) & \((-0.04, 0.03, 0.00)\) & \(0.10\) \\
\bottomrule
\end{tabular}
\caption{Parameters of the two localized Gaussian perturbations in the
ground-truth deformation \eqref{eq:gtruth_armadillo}.}
\label{tab:warp_gt_armadillo}
\end{table}

We then fit the forward deformation
\(\bs\Phi=[\Phi_1,\Phi_2,\Phi_3]^\intercal\approx \bs g\) from the landmark 
pairs \((\bs x_i,\bs y_i)\), where \(\bs y_i=\bs g(\bs x_i)\). Each component
\(\Phi_k\), \(k=1,2,3\), is represented with the 3D biharmonic kernel
\(
\mathcal K(r)=-r.
\)
Again, since all three components share the same landmark set, we compress the
kernel matrix only once in the samplet basis and reuse one Cholesky
factorization of the regularized block \(\bs K_{\Psi\Psi}+\lambda \bs I\), with
\(\lambda=10^{-6}\), for the three right-hand sides. The resulting kernel
representation is then evaluated on the full armadillo mesh, using the fast
multipole method.

The results are collected in Table~\ref{tab:armadillo_param}, while
Figure~\ref{fig:armadillo} shows the original armadillo, the deformed geometry,
and a color map of the pointwise field-recovery error.

\begin{table}[htb]
\centering
\begin{tabular}{ccccccc}
\toprule
  $N$ & $N_{\text{eval}}$ & $\lambda$ & $e_{\mathrm{land}}$ & $e_{\bs\Phi}$ & $t_{\mathrm{FMM}}~[s]$ & $t_{\mathrm{fit}}~[s]$\\
\midrule
$30\,096$ & $2\,798\,914$ & $10^{-6}$ & $6.9\cdot10^{-9}$ & $6.6\cdot10^{-5}$ & $ 902.36 $ & $40.47$ \\
\bottomrule
\end{tabular}
\caption{Three-dimensional armadillo deformation: number of landmarks $N$, number of evaluation points $N_{\text{eval}}$, ridge parameter \(\lambda\), RMS landmark residual \(e_{\mathrm{land}}\), relative \(\ell_2\) field-recovery error \(e_{\bs\Phi}\), fast multipole evaluation time \(t_{\mathrm{FMM}}\), and fitting time \(t_{\mathrm{fit}}\).}
\label{tab:armadillo_param}
\end{table}

\begin{figure}[htb]
\centering

\begin{subfigure}{0.32\textwidth}\centering
  \includegraphics[width=\textwidth]{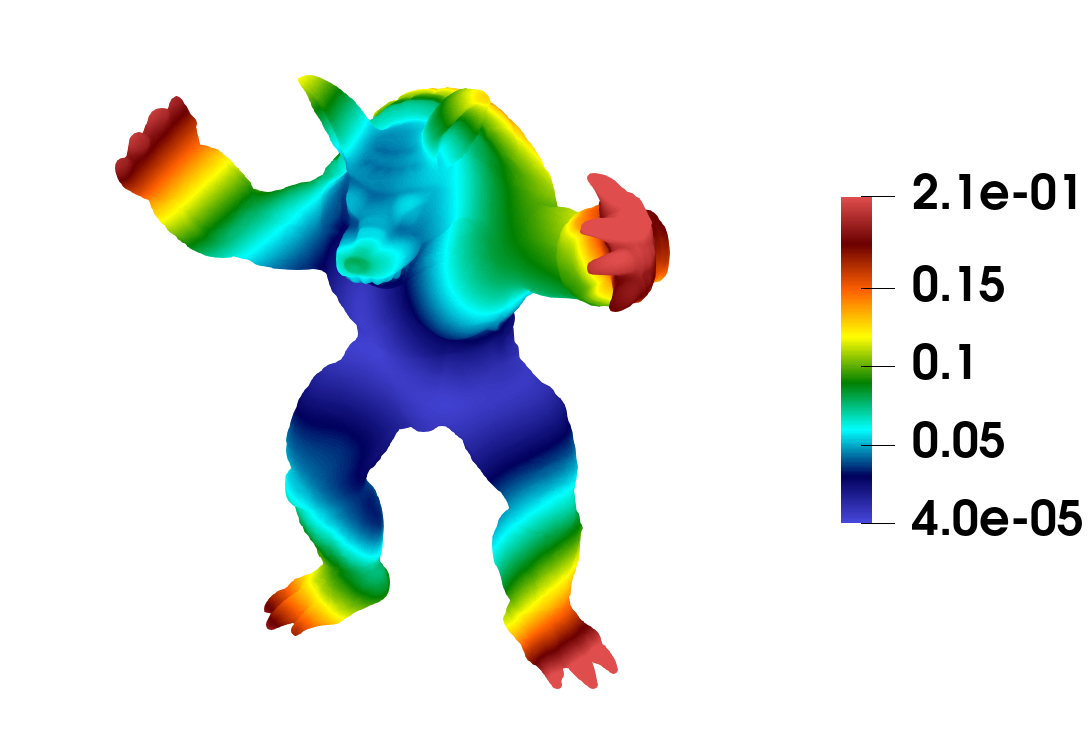}
  \caption{original}
\end{subfigure}\hfill
\begin{subfigure}{0.32\textwidth}\centering
  \includegraphics[width=\textwidth]{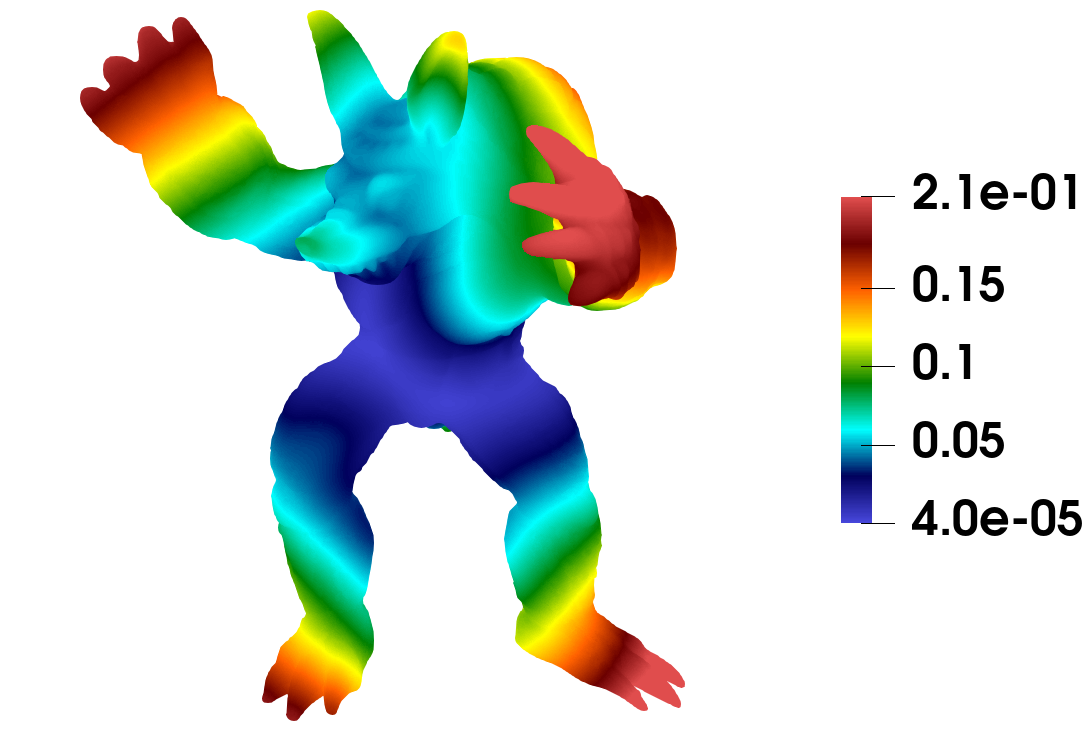}
  \caption{deformed}
\end{subfigure}\hfill
\begin{subfigure}{0.32\textwidth}\centering
  \includegraphics[width=\textwidth]{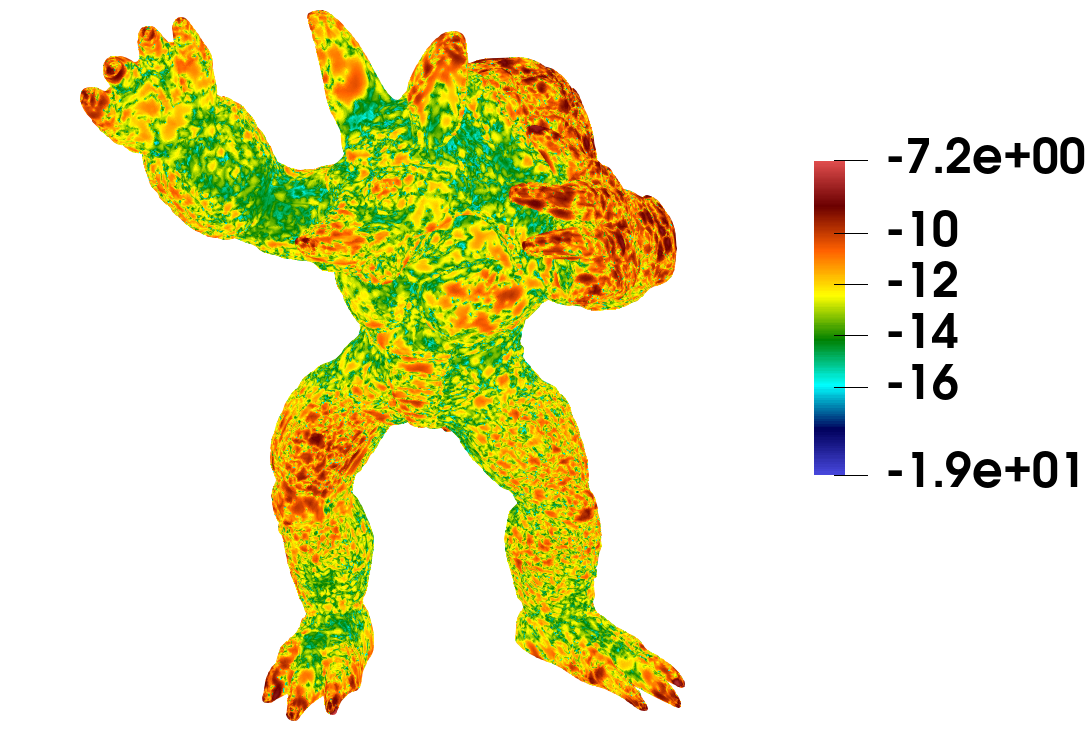}
  \caption{log. error}
\end{subfigure}

\vspace{0.8em}

\begin{subfigure}{0.32\textwidth}\centering
  \includegraphics[width=\textwidth]{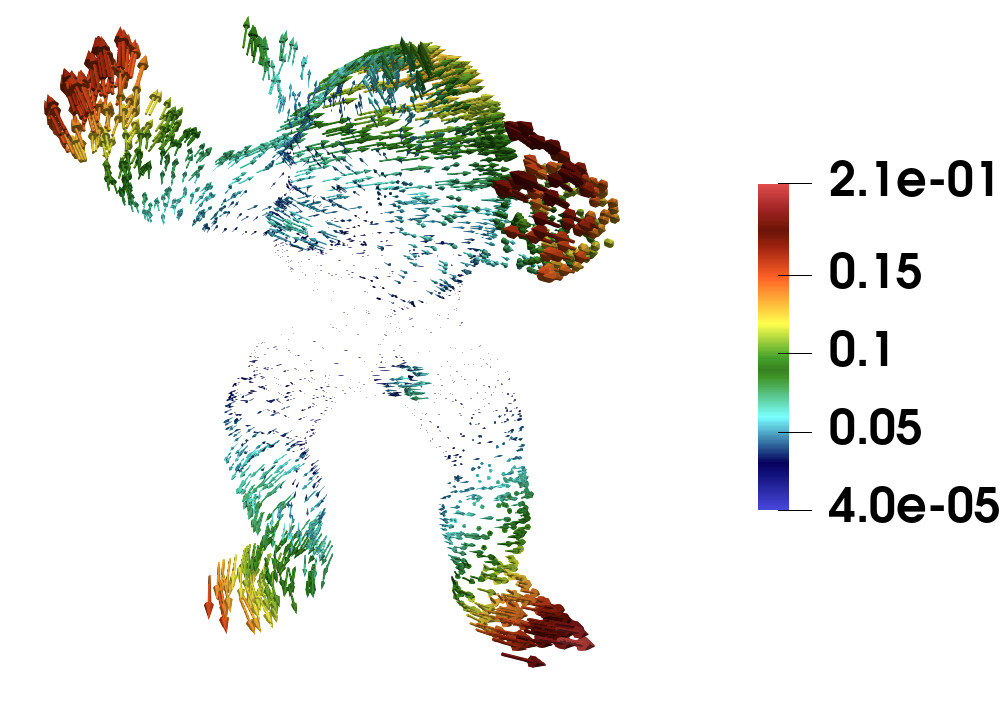}
  \caption{deformation field}
\end{subfigure}
\quad \quad 
\begin{subfigure}{0.32\textwidth}\centering
   \includegraphics[width=\textwidth]{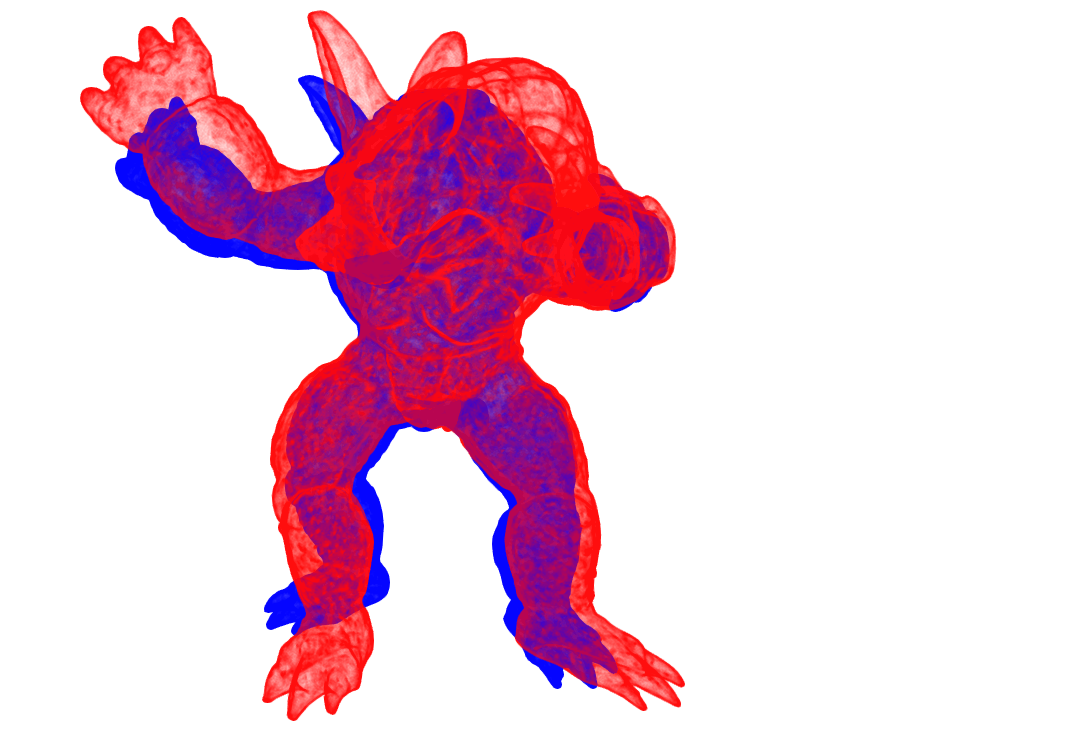}
  \caption{warped geometry}
\end{subfigure}

\caption{Three-dimensional armadillo deformation: original geometry, 
deformed geometry, pointwise logarithmic error, deformation field
and warped shape.}
\label{fig:armadillo}
\end{figure}

\subsection{Validation of the error estimates}\label{sec:num_error}

We conclude by verifying the error estimate of Theorem~\ref{thrm:Error} and its
corollary in the noisy universal Kriging experiment. We measure the error in the
discrete
$\ell_2$-norm, which corresponds to $s=0$ and $r=2$, so that $\gamma=2$ and
$\big(\tfrac12-\tfrac1r\big)_+=0$, and the corollary reduces to
\begin{equation}\label{eq:error_reduced}
  \mathbb E\big(\|Z-\mu\|_{L_2( D)}\big)\ \leq\
  C\big(h_{X, D}^{k}+\nu h_{X, D}^{d/2}\big)|Z|_{B\!L_k( D)}
  + C\rho_{X, D}^{d/2}\big(h_{X, D}^{k-d/2}+\nu\big).
\end{equation}
The first three terms decay under refinement, whereas the last term,
$C\rho_{X, D}^{d/2}\nu$, is independent of $h_{X, D}$ in case of quasi-uniform
data sites. In particular,
it is an irreducible floor proportional to the noise $\nu$ that refinement
does not improve. The numerical convergence analysis confirms the two regimes.

For the Kriging experiment of Section~\ref{sec:num_UK}, we refine the training
set by subsampling the point cloud and measure the error of the posterior mean
on a fixed test set of $15\,000$ sites, for
$\nu\in\{0,0.0001,0.0005,0.001,0.005,0.01,0.05\}$. The polyharmonic kernel
has $q=1$, hence $k=2$. The resulting convergence curves are collected in
Figure~\ref{fig:uk_convergence} and exhibit the two regimes predicted by
\eqref{eq:error_reduced}. Without noise, the error decays quadratically,
matching $h_{X, D}^{2}$, while, in the presence of noise, each curve follows this
rate as long as the approximation error exceeds
$C\rho_{X, D}^{d/2}\nu$, and saturates once the two are comparable. For a
given noise level, this transition identifies the resolution beyond which
further refinement of the data no longer improves the prediction. Note that the
mesh ratio $\rho_{X, D}$ enters only through the constant. Indeed, the
nested subsamples of the point cloud are quasi-uniform, so $\rho_{X, D}$ is
uniformly bounded during the refinement and $C\rho_{X, D}^{d/2}$ is a fixed
constant, whose value is determined by the experiment: for the noise level
$\nu=0.05$, used in the experiment, the error saturates at approximately
$2.4\cdot10^{-2}$, that is, $C\rho_{X, D}^{d/2}\approx 1/2$.

We remark that in all the experiments the cost of $\Ocal(N\log N)$ of the
samplet compression renders the
refinement, up to $N\approx300\,000$ data sites, computationally feasible
without affecting the convergence behavior.

\begin{figure}[htb]
\tikzexternaldisable
\centering
\begin{tikzpicture}
\begin{loglogaxis}[
  width=9cm, height=6cm,
  xlabel={$h_{X, D}$}, ylabel={$e_{\mathrm{test}}$},
  legend style={at={(1.03,0.5)}, anchor=west, draw=none, fill=none,
                font=\small, row sep=2pt},
  legend cell align=left,
  grid=both, major grid style={gray!30}, minor grid style={gray!12},
  tick label style={font=\small}, label style={font=\small},
  log basis x=10, log basis y=10,
  xmin=0.0062, xmax=0.035, ymin=4e-6, ymax=3e-1,
  ytick={1e-5,1e-4,1e-3,1e-2},
  xtick={0.01,0.02,0.03}, xticklabels={0.01,0.02,0.03},
]
\addplot[gray, dashed, thick, forget plot]
  coordinates {(0.0069,0.054) (0.031,0.054)};
\node[gray, font=\scriptsize, anchor=north west]
  at (axis cs:0.012,0.23) {$\mathcal{O}(\nu)$};
\addplot[gray, dashed, thick, forget plot, domain=0.0069:0.031, samples=2]
  {0.25*x^2};
\node[gray, font=\scriptsize, anchor=north]
  at (axis cs:0.0135,3.4e-5) {$\mathcal{O}(h^{2})$};
\addplot[teal, mark=10-pointed star] coordinates {
  (0.0284671, 2.98963e-2)
  (0.0207682, 2.90765e-2)
  (0.0139686, 2.82399e-2)
  (0.0110998, 2.70565e-2)
  (0.00763247,2.57792e-2)
};
\addlegendentry{$\nu=0.05$}

\addplot[violet, mark=asterisk] coordinates {
  (0.0284671, 6.47004e-3)
  (0.0207682, 6.42426e-3)
  (0.0139686, 6.42448e-3)
  (0.0110998, 6.38369e-3)
  (0.00763247,6.34279e-3)
};
\addlegendentry{$\nu=0.01$}

\addplot[orange, mark=square*] coordinates {
  (0.0284671, 3.28563e-3)
  (0.0207682, 3.23899e-3)
  (0.0139686, 3.23191e-3)
  (0.0110998, 3.21673e-3)
  (0.00763247,3.20464e-3)
};
\addlegendentry{$\nu=0.005$}

\addplot[magenta, mark=pentagon*] coordinates {
  (0.0284671, 8.80404e-4)
  (0.0207682, 7.21976e-4)
  (0.0139686, 6.56685e-4)
  (0.0110998, 6.46867e-4)
  (0.00763247,6.43798e-4)
};
\addlegendentry{$\nu=0.001$}

\addplot[brown, mark=diamond*] coordinates {
  (0.0284671, 6.78912e-4)
  (0.0207682, 4.54298e-4)
  (0.0139686, 3.41713e-4)
  (0.0110998, 3.26436e-4)
  (0.00763247,3.22962e-4)
};
\addlegendentry{$\nu=0.0005$}

\addplot[cyan, mark=triangle*] coordinates {
  (0.0284671, 6.00069e-4)
  (0.0207682, 3.22550e-4)
  (0.0139686, 1.26090e-4)
  (0.0110998, 8.21783e-5)
  (0.00763247,7.08700e-5)
};
\addlegendentry{$\nu=0.0001$}

\addplot[purple, mark=*] coordinates {
  (0.0284671, 5.96474e-4)
  (0.0207682, 3.14999e-4)
  (0.0139686, 1.07785e-4)
  (0.0110998, 6.09849e-5)
  (0.00763247,2.97639e-5)
};
\addlegendentry{$\nu=0$}

\end{loglogaxis}
\end{tikzpicture}
\caption{Convergence of the samplet-based universal Kriging under 
  refinement of
the training set, for increasing noise levels $\nu$.}
\label{fig:uk_convergence}
\end{figure}

\section{Conclusion}\label{sec:conclusion}
We have presented a samplet-based framework for the efficient solution of the
saddle-point systems that arise from conditionally positive definite kernel
approximation and, equivalently, from intrinsic Kriging. The main tools are the
vanishing moment property of samplets and the particular structure of the
associated scaling distributions, which lead to a reduced representation of the
polynomial constraint at the discrete level. This enables an efficient null-space
reduction of the indefinite saddle-point system to a symmetric positive definite
system for the detail coefficients together with a small triangular system for the
polynomial coefficients. Combined with the quasi-sparsity of the samplet-compressed
kernel matrix for asymptotically smooth kernels, the assembly and the storage of
the saddle-point system are of cost $\mathcal{O}(N\log N)$, and the reduced system
becomes amenable to a sparse Cholesky factorization with a fill-in reducing
ordering. For polyharmonic splines in Beppo-Levi spaces we have derived error
estimates for the regularized least-squares approximation that account explicitly
for the observational noise.

The numerical experiments indicate that the proposed approach remains accurate and
scalable for both universal Kriging with generalized covariance matrix and for image warping and volumetric
deformation. In these test cases, the efficient solver translates into substantially
larger problem sizes than those practically accessible with dense direct
solvers.


\bibliographystyle{plain}
\bibliography{literature}
\end{document}